\documentclass[journal]{IEEEtran}


\usepackage[ansinew]{inputenc}
\usepackage{amsmath, amsfonts}
\usepackage{amsthm, amssymb}
\usepackage[english]{babel}
\usepackage[T1]{fontenc}
\usepackage{lmodern}
\usepackage{graphicx}
\usepackage{subfigure}
\usepackage{multirow}

\newtheorem{theorem}{Theorem}

\theoremstyle{definition}

\newtheorem{remark}{Remark}

\usepackage{verbatim}
\usepackage{array}
\usepackage{float}
\usepackage{subfigure}
\usepackage{algorithm, algorithmicx, algpseudocode}
\usepackage{color}

\newcommand{\guillemets}[1]{``#1''}
\newcommand{\ve}[1]{\mathbf{#1}}

\newcommand{\blue}[1]{\textcolor{black}{#1}}
\newcommand{\mat}[2]{\left(\begin{array}{#1} #2 \end{array} \right)}

\setcounter{secnumdepth}{3}

\title{Koopman-based lifting techniques for nonlinear systems identification}

\author{A. Mauroy and J. Goncalves
\thanks{A. Mauroy is with Department of Mathematics and Namur Center for Complex Systems (naXys), University of Namur, Belgium (email: alexandre.mauroy@unamur.be). J. Gocalves is with the Luxembourg Centre for Systems Biomedicine, University of Luxembourg, Belvaux, Luxembourg (email: jorge.goncalves@uni.lu).}}

\begin{document}

\maketitle

\begin{abstract}

We develop a novel lifting technique for nonlinear system identification based on the framework of the Koopman operator. The key idea is to identify the linear (infinite-dimensional) Koopman operator in the lifted space of observables, instead of identifying the nonlinear system in the state space, a process which results in a linear method for nonlinear systems identification. The proposed lifting technique is an indirect method that does not require to compute time derivatives and is therefore well-suited to low-sampling rate datasets.

Considering different finite-dimensional subspaces to approximate and identify the Koopman operator, we propose two numerical schemes: a main method and a dual method. The main method is a parametric identification technique that can accurately reconstruct the vector field of a broad class of systems. The dual method provides estimates of the vector field at the data points and is well-suited to identify high-dimensional systems with small datasets. The present paper describes the two methods, provides theoretical convergence results, and illustrates the lifting techniques with several examples.

\end{abstract}

\section{Introduction}
\label{sec:intro}

The problem of identifying \blue{governing equations of continuous-time dynamical systems} from time-series data has attracted considerable interest in many fields such as biology, finance, and engineering. It is also closely related to network inference, which aims at reconstructing the interactions between the different states of a system, a problem of paramount importance in systems biology. In many cases, the identification problem is challenging due to the nonlinear nature of the systems and must be tackled with black-box methods (e.g. Wiener and Volterra series models \cite{nonlin_identif_Wiener}, nonlinear auto-regressive models \cite{nonlin_identif_narx}, neural network models \cite{nonlin_identif_neural}, see also \cite{nonlin_identif_survey1,nonlin_identif_survey2} for a survey). \blue{These methods are related to the classic approach to system identification \cite{Ljung}: they typically deal with long, highly-sampled time-series and provide a relationship between the system inputs and outputs.}

In the related context of nonlinear parameter estimation, a \blue{large body of} methods have been developed to identify the state dynamics of autonomous systems with a known structure \blue{(see e.g. \cite{nonlin_estim_book,Aastrom_identif} and references therein).} Typical methods seek the best linear combination of time derivatives of the state over a set of library functions (similar to the basis functions used in black-box models) \cite{param_estim_der_estim}. Similar approaches have also been proposed recently, partly motivated by the network identification problem (e.g. Bayesian approach \cite{Wei}, SINDy algorithm \cite{Brunton}). The above-mentioned methods are \emph{direct methods}, and their main advantage is that they rely on static linear regression techniques. However, they assume that time derivatives of the state can be accurately estimated (e.g. by using collocation techniques), a requirement that becomes prohibitive when the sampling time is too low, the measurements too noisy, or the time-series too short \blue{(e.g. biology)}. Instead, \emph{indirect} methods solve an initial value problem and \blue{do not require the estimation of} time derivatives \cite{param_estim_nonlin_lsq2}. Hence, they offer a good alternative to direct methods, 
but at the expense of solving a (nonconvex) nonlinear least squares problem. The goal of this paper is to propose a new indirect method for \blue{estimating the state dynamics (i.e. governing equations) of nonlinear dynamical systems. In the context of nonlinear system identification/parameter estimation, this method} not only circumvents the estimation of time derivatives but also relies on linear least squares optimization.

The approach proposed in this paper is based on the framework of the so-called Koopman operator \cite{Budisic_Koopman,Koopman}. The Koopman operator is a linear infinite-dimensional operator that describes the evolution of observable-functions along the trajectories of the system. Starting with the seminal work of \cite{Mezic}, several studies have investigated the interplay between the spectral properties of the operator and the properties of the associated system, a body of work that has led to new methods for the analysis of nonlinear systems (e.g. global stability analysis \cite{Mauroy_Mezic_stability}, global linearization \cite{Lan}, monotone systems \cite{Sootla_Mauroy_basins}, delayed systems \cite{Muller_delay_Koopman}). While the above-mentioned studies focus on systems described by a known vector field, the Koopman operator approach is also conducive to data analysis and directly connected to numerical schemes such as Dynamic Mode Decomposition (DMD) \cite{Arbabi,Rowley,Schmid,Tu}. This yielded another set of techniques for data-driven analysis and control of nonlinear systems (observer synthesis \cite{Surana}, model predictive control \cite{Korda_MPC}, optimal control \cite{Kaiser_eigenfunction_control}, power systems stability analysis \cite{Susuki}, to list a few). In this context, this paper aims at connecting data to vector field, thereby bridging these two sets of methods.

The Koopman operator provides a linear representation of the nonlinear system in a lifted (infinite-dimensional) space of observable-functions. \blue{Through this lifting approach, one can therefore \emph{identify the linear Koopman operator in the space of observables} (see e.g. \cite{Rowley_EDMD}), instead of identifying the nonlinear system in the state space. Our numerical scheme exploits this idea and} proceeds in three steps: (1) lifting of the data, (2) identification of the Koopman operator, and (3) identification of the vector field. In the first step, snapshot data are lifted to the space of observables. In the second step, we derive two distinct methods: (a) a main method which identifies a representation of the Koopman operator in a basis of functions; (b) a dual method which identifies the representation of the operator in the \guillemets{sample space}. In the third step, we connect the vector field to the infinitesimal generator of the identified operator and solve a linear least squares problem to compute the linear combination of the vector field in a basis of library functions. The two methods are complemented with convergence results showing that they identify the vector field exactly in optimal conditions. The main method has been initially proposed in \cite{Mauroy_Goncalves_CDC} and a similar approach developed in a stochastic framework \blue{and based on non-convex optimization} can also be found in the more recent work \cite{Riseth}. \blue{It should be noted that the lifting technique is not new. More precisely, the first steps of our methods (i.e. lifting and identification of the operator) are directly related to a component of the Extended Dynamic Mode Decomposition (EDMD) technique \cite{Rowley_EDMD} (main method) or inspired from kernel-based EDMD technique \cite{Williams_kernel} (dual method). Although EDMD techniques focus on the spectral properties of the operator, their lifting approach could also be used for prediction \cite{Korda_MPC}. In contrast, the main goal of the two methods proposed in this paper is not to predict trajectories, but to provide a functional representation of the vector field. This representation can be further used for system analysis (e.g. existence of equilibria, stability) and model-based control, and is also directly related to the network identification problem. Note however that the main method has recently been used with success in the context of robot motion prediction \cite{lifting_robot}.
}

The proposed lifting technique has several advantages. First of all, it relies only on linear methods which are easy and efficient to implement. It is also well-suited to data acquired from short time-series with low sampling rates \blue{(e.g. several experiments in biology, with a few costly measurements).} Although initially limited to polynomial vector fields, the main method works efficiently with a broad class of behaviors, including unstable and chaotic systems. In addition, the dual method is well-suited to identify large-dimensional systems and to reconstruct network topologies, in particular when the number of sample points is smaller than the unknown system parameters. Finally, lifting techniques can be extended to identify non-polynomial vector fields and open systems (with input or process noise). \blue{In contrast to these advantages, a main limitation of the methods is that they require full state measurements and therefore cannot provide an input-output representation of the system.}

The rest of the paper is organized as follows. In Section \ref{sec:Koopman}, we present the problem and introduce the general lifting technique used for system identification. Section \ref{sec:lifting} describes the main method and provides theoretical convergence results, while Section \ref{sec:extensions} discusses some extensions of the methods to non-polynomial vector fields and open systems. In Section \ref{sec:dual}, we propose the dual method to identify high-dimensional systems with small datasets and give convergence proofs. The two methods are illustrated with several examples in Section \ref{sec:examples}, where the network reconstruction problem is also considered. Concluding remarks and perspectives are given in Section \ref{sec:conclu}.

\section{Identification in the Koopman operator framework}
\label{sec:Koopman}

\subsection{Problem statement}

We address the problem of identifying the vector field of a nonlinear system from time series generated by its dynamics. We consider the system
\begin{equation}
\label{syst1}
\dot{\ve{x}} = \ve{F}(\ve{x}) \,, \quad \ve{x} \in \mathbb{R}^n
\end{equation}
where the vector field $\ve{F}(\ve{x})$ is of the form
\begin{equation}
\label{eq:vec_field}
\ve{F}(\ve{x}) = \sum_{k=1}^{N_F} \ve{w}_k \, h_k(x) \,.
\end{equation}
The vectors $\ve{w}_k=(w_k^1 \, \cdots \, w_k^n)^T \in \mathbb{R}^n$ are unknown coefficients (to be identified) and the library functions $h_k$ are assumed to be known. Note that some coefficients might be equal zero. Unless stated otherwise, we will consider that the vector field is polynomial, so that $h_k$ are monomials: $h_k=p_k$ with
\begin{equation}
\label{eq:monomials}
p_k(\ve{x}) \in \{ x_1^{s_1} \cdots x_n^{s_n} | (s_1,\dots,s_n) \in \mathbb{N}^n, s_1+\cdots+s_n \leq  m_F \}
\end{equation}
where $m_F$ is the total degree of the polynomial vector field. The number of monomials in the sum \eqref{eq:vec_field} is given by $N_F=(m_F+n)!/(m_F! n!)$.
As shown in Section \ref{sec:other_vec}, the proposed method can also be generalized to other types of vector fields in a straightforward way.

Our goal is to identify the vector field $\ve{F}$ (i.e. the $N_F$ coefficients $\ve{w}_k$) from snapshot measurements of the system trajectories. We consider $K$ snapshot pairs $(\ve{x}_k,\ve{y}_k)$ obtained from noisy measurements (proportional to the exact state value): we have
\begin{equation}
\label{eq:meas_noise}
\ve{x}_k = \ve{\bar{x}}_k + \epsilon(\ve{x}_k) \qquad \ve{y}_k = \ve{\bar{y}}_k + \epsilon(\ve{y}_k)
\end{equation}
where $\epsilon$ is the state-dependent measurement noise, and
\begin{equation}
\label{rel_x_y}
\ve{\bar{y}}_k=\varphi^{T_s}(\ve{\bar{x}}_k)
\end{equation}
where $t \mapsto \varphi^t(\ve{x_0})$ is the solution to \eqref{syst1} associated with the initial condition $\ve{x_0}$. We assume that the measurement noise is Gaussian and proportional to the state value, i.e. $\epsilon(\ve{x})=\ve{x} \odot \ve{v}$ where $\odot$ denotes the element-wise product and $\ve{v}$ is a Gaussian random variable with zero mean and standard deviation $\sigma_{meas}$. We also assume that all pairs $(\ve{x}_k,\ve{y}_k)$ lie in a compact set $X \subset \mathbb{R}^n$ and are obtained with the same sampling period $T_s$. They can belong to a single trajectory or to multiple trajectories. Stochastic systems with process noise and systems with inputs will also be considered (see Section \ref{sec:extensions}).

\begin{remark}
	For numerical reasons, we will assume in general that the data points lie in a set $X \subset [-1,1]^n$. If original data do not satisfy this assumption, then they can be rescaled to yield new data pairs $(\ve{x}'_k,\ve{y}'_k) = (\ve{x}'_k/\alpha,\ve{y}'_k/\alpha) \in [-1,1]^{2n}$. These new pairs enable to identify a vector field $\ve{F}'(\ve{x})$ with coefficients $\ve{w}'_k = \alpha^{m_k-1} \ve{w}_k$, where $m_k$ is the total degree of the monomial $p_k$. \hfill $\diamond$
\end{remark}

\subsection{Koopman operator}

System \eqref{syst1} represents the state dynamics in $\mathbb{R}^n$. Alternatively, the system can be described in a lifted space $\mathcal{F}$ of observable-functions $f:\mathbb{R}^n \to \mathbb{R}$. Provided that the observable functions are continuously differentiable, their dynamics in the lifted space are given by
\begin{equation}
\label{syst_infinite}
\dot{f} = (\ve{F} \cdot \nabla) f \,,  \quad f \in \mathcal{F},
\end{equation}
where $\dot{f}$ denotes $\partial (f \circ \varphi^t)/\partial t$ (with a slight abuse of notation) and $\nabla$ denotes the gradient (see e.g. \cite{Lasota_book}). In contrast to \eqref{syst1}, the dynamics \eqref{syst_infinite} are infinite-dimensional but linear. 

While the flow induced by \eqref{syst1} in the state space is given by the nonlinear flow map $\varphi$, the flow induced by \eqref{syst_infinite} in the lifted space is given by the linear semigroup of Koopman operators $U^t:\mathcal{F} \to \mathcal{F}$, $t\geq 0$. This semigroup governs the evolution of the observables along the trajectories, i.e.
\begin{equation*}
U^t f = f \circ \varphi^t \,.
\end{equation*}
Under appropriate conditions  (see Section \ref{sec:proofs}), the semigroup of Koopman operators is strongly continuous and generated by the operator
\begin{equation}
\label{inf_gen}
L = \ve{F} \cdot \nabla
\end{equation}
appearing in \eqref{syst_infinite}. In this case, we use the notation
\begin{equation}
\label{Ut_L}
U^t = e^{L t } \,.
\end{equation}
The operator $L$ is called the infinitesimal generator of the Koopman operator and we denote its domain by $\mathcal{D}(L)$. 

\subsection{Linear identification in the lifted space}

There is a one-to-one correspondence between systems of the form \eqref{syst1} and lifted systems \eqref{syst_infinite}, or equivalently between the flow $\varphi^t$ and the semigroup of Koopman operators $U^t$. Exploiting this equivalence, we propose to solve the identification problem in the lifted space instead of the state space. This can be done in three steps (see Figure \ref{Koopman_ident}).
\begin{enumerate}
\item{\textbf{Lifting of the data.}} Snapshots pairs $(\ve{x}_k,\ve{y}_k)$ are lifted to the space of observable by constructing new pairs of the form $(g(\ve{x}_k),g(\ve{y}_k))$ for some $g \in \mathcal{F}$. The functions $g$ are assumed to be continuously differentiable and we call them \emph{basis functions}. It follows from \eqref{eq:meas_noise} and \eqref{rel_x_y} that
\begin{equation}
\label{f_y}
g(\ve{y}_k) = g(\varphi^{T_s}(\ve{x}_k-\epsilon(\ve{x}_k))+\epsilon(\ve{y}_k)) \approx U^{T_s} g(\ve{x}_k) + \mathcal{O}(\| \epsilon \|)\,.
\end{equation}

\item{\textbf{Identification of the Koopman operator.}} A finite-dimensional projection of the Koopman operator is obtained through a classic linear identification method that is similar to a component of the Extended Dynamic Mode Decomposition (EDMD) algorithm \cite{Rowley_EDMD}. This yields (an approximation of) the infinitesimal generator $L$ of the Koopman operator. 
\item{\textbf{Identification of the vector field.}} Using \eqref{inf_gen}, we can finally obtain the vector field $\ve{F}$.
\end{enumerate}

\begin{figure}[h]
   \centering
    \includegraphics[width=6cm]{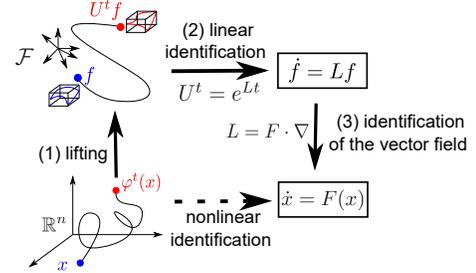}
   \caption{Classical nonlinear system identification is performed directly in the state space. In contrast, the proposed lifting technique consists of three steps: (1) lifting of the data; (2) linear identification of the Koopman operator in the lifted space; (3) identification of the vector field.}
   \label{Koopman_ident}
\end{figure}

\section{The main lifting method}
\label{sec:lifting}

\subsection{Description of the method}

This section describes in detail the three steps of our main method. The first step and the first part of the second step are related to a component of the EDMD algorithm (see \cite{Rowley_EDMD} for more details).

\subsubsection{First step - lifting of the data}
\label{sec:lifting_data}

The data must be lifted to the infinite-dimensional space $\mathcal{F}$ of observables. However, the method has to be numerically tractable and is developed in a finite-dimensional linear subspace $\mathcal{F}_N \subset \mathcal{F}$ spanned by a basis of $N$ linearly independent functions. The choice of basis functions $\{g_k\}_{k=1}^N$ can be arbitrary (e.g. Fourier basis, radial basis functions), but might affect the method performances. 
Since the vector field is assumed to be polynomial, we naturally choose the basis of monomials $\{g_k\}_{k=1}^N=\{p_k\}_{k=1}^N$ with total degree less or equal to $m$ to facilitate the representation of the Koopman operator. The number of basis functions is equal to $N=(n+m)!/(n! \, m!)$. \blue{We impose $m \geq m_F$.}

For each snapshot pair $(\ve{x}_k,\ve{y}_k) \in \mathbb{R}^{n \times 2}$, $k \in \{1,\dots,K\}$, we construct a new pair $(\ve{p}(\ve{x}_k),\ve{p}(\ve{y}_k)) \in \mathbb{R}^{N \times 2}$, where $\ve{p}(\ve{x}) = (p_1(\ve{x}),\dots,p_N(\ve{x}))^T$ denotes the vector of basis monomials. In the following, we will also use the $K \times N$ matrices
\begin{equation}
\label{mat_P}
\ve{P_x} = \mat{c}{\ve{p}(\ve{x}_1)^T \\ \vdots \\ \ve{p}(\ve{x}_K)^T}
\qquad 
\ve{P_y} = \mat{c}{\ve{p}(\ve{y}_1)^T \\ \vdots \\ \ve{p}(\ve{y}_K)^T}\,.
\end{equation}

\subsubsection{Second step - identification of the Koopman operator}
\label{sec:ident_Koopman}

Now we proceed to the identification of the Koopman operator $U^t$, for $t=T_s$. More precisely, we will identify the finite-rank operator $U_N: \mathcal{F}_N \to \mathcal{F}_N$ of the form $U_N = P_N U^{T_s}|_{\mathcal{F}_N}$, where $P_N:\mathcal{F} \to \mathcal{F}_N$ is a projection operator onto the subspace $\mathcal{F}_N$ and where $U^t|_{\mathcal{F}_N}:\mathcal{F}_N \to \mathcal{F}$ is the restriction of the Koopman operator to $\mathcal{F}_N$. Considering
\begin{equation}
\label{f_in_basis}
f = \ve{a}^T\, \ve{p}\,, \qquad U_N f = \ve{b}^T \ve{p}\,, 
\end{equation}
we can define a matrix $\ve{\overline{U}}_N \in \mathbb{R}^{N \times N}$ such that
\begin{equation}
\label{def_U_N}
\ve{\overline{U}}_N \, \ve{a}  =\ve{b} \,.
\end{equation}
The matrix $\ve{\overline{U}}_N$ is a representation of the projected Koopman operator $U_N$. It also provides an approximate finite-dimensional linear description of the nonlinear system. This description is not obtained through local linearization techniques and is valid globally. 

It follows from \eqref{f_in_basis} and \eqref{def_U_N} that
\begin{equation}
\label{equality_a_U}
U_N f = U_N (\ve{a}^T  \ve{p})  = (\ve{\overline{U}}_N \ve{a})^T \ve{p}
\end{equation}
and, since \eqref{equality_a_U} holds for all $\ve{a}$, we have
\blue{
\begin{equation}
\label{equality_a_U_2}
[U_N p_1 \, \cdots \, U_N p_N]^T  = \ve{p}^T \ve{\overline{U}}_N\,,
\end{equation}
}
where the operator $U_N$ acts on each component of the vector $\ve{p}$. By considering each column separately, we obtain $P_N U^{T_s} p_j = U_N p_j = \ve{c}_j^T \ve{p}$, where $\ve{c}_j$ is the $j$th column  of $\ve{\overline{U}}_N$. This shows that each column of $\ve{\overline{U}}_N$ is related to the projection onto $\mathcal{F}_N$ of the image of a basis function $p_j$ through the Koopman operator $U^{T_s}$.

There are an infinity of possible projections $P_N$. We consider here a discrete orthogonal projection yielding the least squares fit at the points $\ve{x}_k$, $k=1,\dots,K$, with $K \geq N$:
\begin{equation}
\label{eq:proj}
P_N g = \underset{\tilde{g} \in \textrm{span}\{p_1,\dots,p_N\}}{\textrm{argmin}} \sum_{k=1}^K |\tilde{g}(\ve{x}_k) - g(\ve{x}_k)|^2\,.
\end{equation}
This corresponds to the least squares solution
\begin{equation*}
P_N g = \ve{p}^T \ve{P_x^\dagger} \mat{c}{g(\ve{x}_1) \\ \vdots \\ g(\ve{x}_K)}
\end{equation*}
where $\ve{P}^\dagger$ denotes the pseudoinverse of $\ve{P}$. For $g=U^{T_s} p_j$, we obtain
\begin{equation*}
P_N (U^{T_s} p_j) = \ve{p}^T \ve{P_x^\dagger} \mat{c}{U^{T_s} p_j(\ve{x}_1) \\ \vdots \\ U^{T_s} p_j(\ve{x}_K)} \approx \ve{p}^T \ve{P_x^\dagger} \mat{c}{p_j(\ve{y}_1) \\ \vdots \\ p_j(\ve{y}_K)}
\end{equation*}
where we used \eqref{f_y} evaluated at the states $\ve{x}_k$ and assumed that measurement noise $\| \epsilon \|$ is small. Equivalently, we have $U_N \ve{p}^T \approx \ve{p}^T \ve{P_x^\dagger} \, \ve{P_y}$ so that \eqref{equality_a_U_2} yields
\begin{equation}
\label{eq:least_sq_prob}
\ve{\overline{U}}_N \approx \ve{P_x^\dagger} \, \ve{P_y} \,.
\end{equation}

Inspired by \eqref{Ut_L}, we finally compute
\begin{equation}
\label{eq:L_data}
\ve{\overline{L}_{data}} = \frac{1}{T_s} \log (\ve{P_x^\dagger} \, \ve{P_y})\,,
\end{equation}
where the function $\log$ denotes the (principal) matrix logarithm. The matrix $\ve{\overline{L}_{data}}$ is an approximation of the matrix representation $\ve{\overline{L}}_N$ of $L_N = P_N L|_{\mathcal{F}_{N}}$, where $L_N f = \ve{p}^T (\ve{\overline{L}}_N \ve{a})$ for all $f = \ve{p}^T \ve{a}$. A rigorous justification is given in Section \ref{sec:proofs}.

\begin{remark}
Even with no measure noise, $\ve{\overline{L}_{data}}$ is only an approximation of $\ve{\overline{L}}_N$. Indeed, $\ve{\overline{L}_{data}}$ is the matrix representation of the finite-rank operator $\frac{1}{T_s} \log (P_N U^{T_s}|_{\mathcal{F}_N}) = \frac{1}{T_s} \log (P_N e^{L T_s}|_{\mathcal{F}_N}) \neq P_N L|_{\mathcal{F}_N}$. The two matrices $\ve{\overline{L}_{data}}$ and $\ve{\overline{L}}_N$ are identical only in the limit $N \rightarrow \infty$ and under some additional conditions related to the non-uniqueness of the matrix logarithm (see Section \ref{sec:proofs} \blue{for the details}). \hfill $\diamond$
\end{remark}

\blue{
\subsubsection{Third step - identification of the vector field}
\label{subsec:third_step}
We are now in position to identify the coefficients \mbox{$\ve{w}_k= (w_k^1 \cdots w_k^n)$} of the vector field. With the basis function $p_l(\mathbf{x}) = x_j$ where $x_j$ is the $j$th component of $\mathbf{x}$, we have 
\begin{equation*}
L_N p_l = P_N \left( \ve{F} \cdot \nabla p_l \right) = P_N F_j = F_j\,.
\end{equation*}
Since $L_N p_l = \ve{p}^T (\ve{\overline{L}}_N \ve{e}_l)$, it follows that
\begin{equation}
\label{eq:F_j}
F_j = \ve{p}^T (\ve{\overline{L}}_N \ve{e}_l) \approx \ve{p}^T (\ve{\overline{L}_{data}} \ve{e}_l)\,,
\end{equation}
i.e. the $l$th column of $\ve{\overline{L}_{data}}$ contains the estimates $\hat{w}_k^j$.
Equivalently, we have
\begin{equation}
\label{eq:coeff_main}
\hat{w}_k^j = \left[ \ve{\overline{L}_{data}} \right]_{kl} \,.
\end{equation}
}
 
\begin{remark}[Nonlinear least squares problem]
The identification problem could also be performed at the level of the Koopman semigroup. However solving the equality $\ve{\overline{U}} = e^{\ve{\overline{L}}T_s}$ (with a square matrix $\ve{\overline{L}}$) amounts to solving a (nonconvex) nonlinear least squares problem (as done in \cite{Riseth}). This might also be equivalent to solving the direct identification problem with an exact Taylor discretization of time-derivatives \cite{Kazantzis}.
\hfill $\diamond$
\end{remark}

\blue{
\begin{remark}
The vector field coefficients are obtained with $n$ columns of $\ve{\overline{L}_{data}}$ related to the monomials of degree $1$. Instead, we could use all columns $\ve{\overline{L}_{data}}$. In this case, the coefficients are the solutions to an overdetermined set of equations, which could be solved by promoting sparsity (e.g. Lasso). More details can be found in \cite{Mauroy_Goncalves_CDC}. However, numerical experiments suggest that this does not improve the results. \hfill $\diamond$
\end{remark}
}

\begin{remark}[Estimation of the vector field values]
\label{rem:estim_vec}
If needed, the method can directly provide the values $\ve{F}(\ve{x}_k)$ of the vector field. Evaluating \eqref{eq:F_j} at $\ve{x}_k$ for all $k=1,\dots,K$, we obtain an approximation $\hat{F}_j$ of the vector field given by
\begin{equation}
\label{eq:comput_vec_field}
\begin{pmatrix}
\hat{F}_j(\ve{x}_1) \\
\vdots\\ 
\hat{F}_j(\ve{x}_K)
\end{pmatrix}
= \ve{P_x} \, (\ve{\overline{L}_{data}} \, \ve{e}_l)
\end{equation}
with \blue{$p_l(\mathbf{x}) = x_j$.} This is quite similar to the approach developed with the dual method presented in Section \ref{sec:dual}. \hfill $\diamond$
\end{remark}

\subsection{Algorithm}

Our main lifting method for system identification is summarized in Algorithm \ref{alg:lifting}.

\begin{algorithm}[h]
	\caption{Main lifting method for nonlinear system identification}
	\label{alg:lifting}
	\begin{algorithmic}[1]
\Statex{\bf Input:} Snapshot pairs $\{(\ve{x}_k,\ve{y}_k)\}_{k=1}^K$, $\ve{x}_k \in \mathbb{R}^n$; sampling period $T_s$; \blue{integers $m \geq 1$ and $m_F \geq 0$ (with $m\geq m_F$).}
\Statex{\bf Output:} Estimates $\hat{w}_k^j$.
\State $N := (m+n)!/(m!n!)$; $N_F := (m_F+n)!/(m_F!n!)$
\While {$N > K$} 
					\State Increase $K$ (add snapshot pairs) or decrease $m$
\EndWhile
\State Construct the $K\times N$ matrices $\ve{P_x}$ and $\ve{P_y}$ defined in \eqref{mat_P}
\State Compute the $N \times N$ matrix $\ve{\overline{L}_{data}}$ defined in \eqref{eq:L_data}
\State \blue{$\hat{w}_k^j := \left[ \ve{\overline{L}_{data}} \right]_{kl}$, with $l$ such that $p_l(\mathbf{x}) = x_j$}
	\end{algorithmic}
\end{algorithm}

\subsection{Theoretical results}
\label{sec:proofs}

In this section, we prove the convergence of Algorithm \ref{alg:lifting} in optimal conditions, i.e. with an infinite number of data points and basis functions, and an arbitrarily high sampling rate.

We consider the space $\mathcal{F} = L^2(X)$  (where $\|\cdot \|$ is the $L^2$ norm) and the subspace $\mathcal{F}_N$ spanned by the monomials $\{p_k\}_{k=1}^N$. We will further assume that the flow induced by \eqref{eq:syst1} is invertible and nonsingular \footnote{The flow is nonsingular if $\mu(A) \neq 0$ implies $\mu(\varphi^t(A)) \neq 0$ for all $A \in \mathbb{R}^n$ and all $t>0$, where $\mu$ is the Lebesgue measure. This is a generic condition that is satisfied when the vector field $F$ is Lipschitz continuous, for instance.}, and that $X$ is forward-invariant (i.e. $\varphi^t(X) \subseteq X$ for all $t>0$) or backward-invariant\footnote{When $X$ is backward-invariant, we assume that $f(x)=0$ for all $x \notin X$ and all $f \in \mathcal{F}$, so that $U^t$ is a well-defined semigroup.} (i.e. $\varphi^{-t}(X) \subseteq X$ for all $t>0$). Under these conditions, we can check that the semigroup $U^t$ is strongly continuous. For continuous functions $g:X \to \mathbb{R}$, which are dense in $L^2(X)$, we have $\lim_{t \rightarrow 0} \| g - U^t g \| = 0$. Moreover, we have
\begin{equation*}
\begin{split}
\|U^t f \|^2 = \int_X |U^t f(x)|^2 dx & =  \int_{\varphi^t(X)} |f(x)|^2 |J_{\varphi^{-t}}(x)| dx \\
& \leq  \max_{x \in X} | J_{\varphi^{-t}}(x)| \|f \|^2
\end{split}
\end{equation*}
or equivalently
\begin{equation*}
\frac{\|U^t f \|^2}{\|f \|^2} \leq \max_{x \in X} |J_{\varphi^{t}}(x)|^{-1}
\end{equation*}
where $|J_{\varphi^{t}}(x)|$ is the determinant of the Jacobian matrix of $\varphi^t(x)$. Since the flow is nonsingular, $|J_{\varphi^{t}}(x)| \neq 0$ implies that $U^t$ is bounded. It follows that the semigroup of Koopman operators $U^t$ is strongly continuous (see e.g. \cite[Proposition I.5.3(c)]{Engel_Nagel}).

\blue{We are now in position to show that Algorithm \ref{alg:lifting} yields exact estimates $\hat{w}_k^j$ and $\hat{F}_j = \sum_{k=1}^{N} \hat{w}_k^j p_k$ in optimal conditions.}
\begin{theorem}
\label{theo1}
\blue{
Assume that the sample points $\ve{x}_k$ are uniformly randomly distributed in a compact forward or backward invariant set $X$, and consider $\ve{y}_k = \varphi^{T_s}(\ve{x}_k)$ (no measurement noise) where $\varphi^t$ is an invertible and nonsingular flow generated by the dynamics \eqref{eq:syst1}. If the Algorithm \ref{alg:lifting} is used with the data pairs $\{\ve{x}_k,\ve{y}_k\}_{k=1}^K$ (with $K\geq N$) and with a set of basis functions whose span  is dense in $L^2(X)$ and which contains the identity function $f_j(\ve{x})=x_j$, then the estimated vector field satisfies
\begin{equation*}
\lim_{N \rightarrow  \infty} \lim_{K \rightarrow  \infty} \lim_{T_s \rightarrow 0} \left\| \hat{F}_j - F_j \right\| = 0
\end{equation*}
with probability one. Moreover, if the vector field is of the form \eqref{eq:vec_field} with $h_k=p_k$ (monomials), then
\begin{equation*}
\lim_{T_s \rightarrow 0} \hat{w}_k^j = w_k^j\,, \qquad k=1,\dots,N_F
\end{equation*}
with probability one for all $N \geq N_F$.
}
\end{theorem}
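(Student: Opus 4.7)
The plan is to peel off the triple limit from the inside out, using a distinct tool for each: a Taylor expansion of the Koopman semigroup for $T_s\to 0$, a strong law of large numbers for $K\to\infty$, and density of the basis in $L^2(X)$ for $N\to\infty$. Throughout, I use that with probability one the sample matrix $\ve{P_x}$ has full column rank (a generic property of random samples drawn from a set of positive Lebesgue measure), so that $\ve{P_x}^\dagger=(\ve{P_x}^T\ve{P_x})^{-1}\ve{P_x}^T$ is well defined and the empirical least-squares operator $\hat{P}_N g(\ve{x})=\ve{p}(\ve{x})^T\ve{P_x}^\dagger(g(\ve{x}_1),\dots,g(\ve{x}_K))^T$ is a genuine projector onto $\mathcal{F}_N$.

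For the innermost limit, I would use that $\varphi^t$ is smooth on the compact set $X$ and that the basis functions lie in $\mathcal{D}(L)$, yielding the uniform Taylor expansion $U^{T_s}p_k = p_k + T_s\,Lp_k + O(T_s^2)$ on $X$. Applied column by column and using boundedness of $(\ve{P_x}^T\ve{P_x})^{-1}\ve{P_x}^T$ to control the projection of the $O(T_s^2)$ residual, this gives
\begin{equation*}
\ve{P_x}^\dagger\ve{P_y} \;=\; I + T_s\,\ve{M} + O(T_s^2),
\end{equation*}
where $\ve{M}$ is the matrix representation of the empirically projected generator $\hat{P}_N L|_{\mathcal{F}_N}$ in the basis $\ve{p}$. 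Because the right-hand side lies within $O(T_s)$ of the identity, the principal matrix logarithm is well defined, and the expansion $\log(I+\ve{A})=\ve{A}+O(\|\ve{A}\|^2)$ gives $\ve{\overline{L}_{data}} = \ve{M}+O(T_s)$, hence entrywise $\ve{\overline{L}_{data}}\to\ve{M}$ as $T_s\to 0$.

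Next I would take $K\to\infty$ with $N$ fixed. The entries of $\frac{1}{K}\ve{P_x}^T\ve{P_x}$ and $\frac{1}{K}\ve{P_x}^T(L\ve{p})(\ve{X})$ are sample means of bounded functions of uniform i.i.d.\ samples, so by the strong law of large numbers they converge almost surely to the $L^2(X)$ inner products $\langle p_i,p_j\rangle$ and $\langle p_i,Lp_j\rangle$. A continuity argument then gives $\ve{M}\to\ve{\overline{L}}_N$ almost surely, where $\ve{\overline{L}}_N$ represents $P_N L|_{\mathcal{F}_N}$ with $P_N$ the $L^2(X)$ orthogonal projector onto $\mathcal{F}_N$. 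For the outer limit $N\to\infty$, since $f_j(\ve{x})=x_j\in\mathcal{F}_N$ for every $N$, column $l$ of $\ve{\overline{L}}_N$ represents $P_N L f_j = P_N F_j$, so $\hat{F}_j = P_N F_j$; density of $\bigcup_N\mathcal{F}_N$ in $L^2(X)$ combined with $F_j\in L^2(X)$ (as $X$ is compact and $\ve{F}$ continuous) gives $\|P_N F_j - F_j\|\to 0$, which closes the first claim.

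The polynomial case is immediate from Step 1 alone: when $m\ge m_F$ and $N\ge N_F$, one has $F_j\in\mathcal{F}_N$, hence $\hat{P}_N F_j = F_j$ exactly, and column $l$ of $\ve{M}$ literally contains the coefficients $w_k^j$ for $k=1,\dots,N_F$; the $O(T_s)$ bound of Step 1 then yields $\hat{w}_k^j\to w_k^j$ at fixed $K\ge N$, almost surely, with no need to let $K$ or $N$ grow. The main obstacle, in my view, is the matrix-logarithm step: one must justify that the principal branch is the correct one (which follows from the eigenvalues of $\ve{P_x}^\dagger\ve{P_y}$ clustering near $1$ for $T_s$ small), that the Taylor remainder of the matrix logarithm is uniformly controlled on a neighborhood of the identity independent of $T_s$, and that the $O(T_s^2)$ residual from the semigroup expansion projects to coefficient errors of the same order — a bound that ultimately reduces to the nondegeneracy of $\ve{P_x}$ already invoked.
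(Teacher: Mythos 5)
Your proof is correct, and it shares the paper's outer skeleton --- the inner limit $T_s\to 0$ produces the discretely projected generator, $K\to\infty$ upgrades the discrete projection to the $L^2(X)$-orthogonal projection, $N\to\infty$ follows from density of the basis, and the polynomial claim follows from exactness of the projection on $\mathcal{F}_N$ at fixed $K,N$ --- but the two technical pillars are built quite differently. For the inner limit the paper stays at the operator level: it compares $A_N^{T_s}=e^{P_N L P_N T_s}$ with $U_N^{T_s}=P_N U^{T_s}|_{\mathcal{F}_N}$ through the Duhamel-type identity $(A_N^{T_s}-U_N^{T_s})f=\int_0^{T_s}\frac{d}{d\tau}\bigl(A_N^{\tau}U_N^{T_s-\tau}\bigr)f\,d\tau$, invoking strong continuity of the Koopman semigroup and the mean value theorem, and it controls the two logarithms by an eigenvalue argument. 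You instead Taylor-expand the lifted data matrix, $\ve{P_y}=\ve{P_x}+T_s\,\ve{Q}+O(T_s^2)$ with $[\ve{Q}]_{kj}=Lp_j(\ve{x}_k)$, and apply the power series of the matrix logarithm; since $K$ and $N$ are fixed at this stage, this is purely finite-dimensional and yields an explicit first-order rate $\ve{\overline{L}_{data}}=\ve{M}+O(T_s)$, which the paper's semigroup argument does not provide. The nominal cost is extra regularity (a uniform second-order Taylor remainder for $t\mapsto p_j(\varphi^t(\ve{x})))$, but because only the finitely many fixed samples $\ve{x}_k$ enter, differentiability at $t=0$ (an $o(T_s)$ remainder) already suffices, so nothing is really lost in the setting of the theorem. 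For the middle limit the paper simply cites \cite{Korda_convergence} for the almost-sure convergence of the discrete projection to the $L^2$ projection, whereas your strong-law argument on $\frac{1}{K}\ve{P_x}^T\ve{P_x}$ and $\frac{1}{K}\ve{P_x}^T\ve{Q}$ re-proves that fact directly; both are adequate. Finally, your handling of the principal branch --- eigenvalues of $\ve{P_x^\dagger}\ve{P_y}$ clustering at $1$ for small $T_s$ --- is precisely the role this condition plays in the paper's discussion following the theorem, so the point you flag as the main obstacle is resolved the same way in both arguments.
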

\begin{proof}
\blue{
Since $\ve{x}_k \in X$, the discrete orthogonal projection \eqref{eq:proj} $P_N$ is a well-defined projection (with probability one) from $L^2(X)$ to $\mathcal{F}_N \subset L^2(X)$. For a finite integer $N$, consider the finite-dimensional operators $A_N^{T_s}=e^{P_N L P_N T_s}:\mathcal{F}_N\to \mathcal{F}_N$ and $U_N^{T_s}=P_N U^{T_s}|_{\mathcal{F}_N}$.
Since $\|A_N^{T_s} - I\| \rightarrow 0$ and $\|U_N^{T_s} - I\| \rightarrow 0$ as $T_s \rightarrow 0$, it follows that \begin{equation*}
\begin{split}
\lim_{T_s \rightarrow 0} \frac{\|\log A_N^{T_s} - (A_N^{T_s}-I) \|}{T_s} = 0 \\ \lim_{T_s \rightarrow 0} \frac{\|\log U_N^{T_s} - (U_N^{T_s}-I) \|}{T_s} = 0
\end{split}
\end{equation*}
(The eigenvalues $\lambda(T_s)$ of $A_N^{T_s}$ and $U_N^{T_s}$ satisfy \mbox{$|\lambda(T_s) - 1| \rightarrow 0$}, which implies by L'Hôpital's rule that \mbox{$(\log \lambda(T_s) - (\lambda(T_s)-1))/T_s \rightarrow 0$}.) 
We also have, for all $f\in \mathcal{F}_N$,
\begin{equation*}
\begin{split}
\left\|(A_N^{T_s} - U_N^{T_s} ) f \right\| & = \left \|\int_0^{T_s}  \frac{d}{d\tau} \left(A_N^\tau U_N^{T_s-\tau} \right) f  d\tau \right\|\\
& = \int_0^{T_s}  \left \| A_N^\tau (P_N L P_N-P_N L) U^{T_s-\tau} f \right\| d\tau \\
& \leq \int_0^{T_s}  \left \| A_N^\tau (P_N L P_N-P_N L) f \right\|  \\
& \quad +\left \| A_N^\tau P_N L P_N \right\| \left\| U^{T_s-\tau} f -f\right\|  \\
& \quad +\left \| A_N^\tau P_N \right\| \left\| U^{T_s-\tau} Lf -Lf\right\| d\tau
\end{split}
\end{equation*}
where we used the fact that $L$ and $U^{T_s-\tau}$ commute.
Since $U^{T_s}$ is strongly continuous and $P_n f = f$, it follows from the mean value theorem that
\begin{equation*}
\lim_{T_s \rightarrow 0} \frac{1}{T_s} \left\|(A_N^{T_s} - U_N^{T_s} ) f \right\| = 0 \qquad \forall f\in \mathcal{F}_N \,.
\end{equation*}
Then we get
\begin{equation*}
\begin{split}
& \lim_{T_s \rightarrow 0}\frac{1}{T_s}\|(\log A_N^{T_s} - \log U_N^{T_s} ) f \| \\
& \leq \lim_{T_s \rightarrow 0} \left( \frac{1}{T_s}\|(\log A_N^{T_s} - (A_N^{T_s}-I) ) f \| \right.\\
& \left. \quad + \frac{1}{T_s}\|(\log U_N^{T_s} - (U_N^{T_s}-I) ) f \| + \frac{1}{T_s}\|(A_N^{T_s} - U_N^{T_s} ) f \| \right) = 0 \,.
\end{split}
\end{equation*}
Since there is no measurement noise, \eqref{eq:L_data} and \eqref{eq:comput_vec_field}
imply that
\begin{equation}
\label{eq:diff_F_zero}
\begin{split}
\lim_{T_s \rightarrow 0} \left\| \hat{F}_j - P_N F_j \right\| & = \lim_{T_s \rightarrow 0} \left\| \frac{\log U^{T_s}_N f_j}{T_s}  - P_NLP_N f_j \right\| \\
& = \lim_{T_s \rightarrow 0} \frac{1}{T_s} \left\| \log U^{T_s}_N f_j - \log A_N^{T_s} f_j \right\| = 0 \\
\end{split}
\end{equation}
with the identity function $f_j\in \mathcal{F}_N$.
}

\blue{The discrete orthogonal projection converges in the strong operator topology with probability one to the $L^2$ projection (see e.g. \cite{Korda_convergence} for a proof). Since the basis is complete in $L^2(X)$, the orthogonal projection converges in the strong operator topology to the identity operator as $N \rightarrow \infty$, with probability one. It follows that
\begin{equation*}
\begin{split}
& \lim_{N \rightarrow  \infty} \lim_{K \rightarrow  \infty} \lim_{T_s \rightarrow 0} \left\| \hat{F}_j - F_j \right\| \\
& \leq \lim_{N \rightarrow  \infty} \lim_{K \rightarrow  \infty} \left(\lim_{T_s \rightarrow 0} \left\| \hat{F}_j - P_N F_j \right\| + \left\| P_N F_j - F_j \right\| \right) = 0 \, .
\end{split}
\end{equation*}
}

\blue{
Finally, if the vector field is polynomial with \mbox{$N_F \leq N$}, we have $P_N F_j=F_j$ (with probability one) and it follows from \eqref{eq:diff_F_zero} that $\lim_{T_s \rightarrow 0} \hat{w}_k^j - w_k^j=0$ for all $k$.
}
\end{proof}

According to Theorem \ref{theo1}, Algorithm \ref{alg:lifting} identifies exactly the vector field, even if the data are collected in a small region of the state space (this is made possible by the a priori knowledge that the vector field is polynomial). Note that the requirement to collect the data points on an invariant set might not always be satisfied in practice. This is however a technical condition that ensures that $U^t$ is a well-defined semigroup of operators on $[0,T_s]$. However, the result shows that an infinite sampling frequency ($T_s \rightarrow 0$) is required by the use of the matrix logarithm. This issue is related to the so-called system aliasing and is discussed with more details in \cite{Zuogong_CDC}. Intuitively, an infinite sampling rate is needed to capture the infinity of frequencies that characterize a nonlinear system. \blue{This condition ensures in particular that the eigenvalues of $T_s P_N L P_N$ lie in the strip $\{z\in \mathbb{C}:|\Im\{z\}|<\pi\}$ so that the properties of the principal branch of the logarithm imply that $\frac{1}{T_s} \log e^{T_s P_N L P_N} = P_N L P_N$ in \eqref{eq:diff_F_zero}. In the case of polynomial vector fields, it is noticeable that the number of basis functions does not need to tend to infinity. In fact, when $T_s$ tends to zero, $\log U_N^{T_s} f_j/T_s \approx (I-U_N^{T_s})f_j/T_s$ corresponds to the first order approximation of the time derivative $\dot{x}_j=F_j$ and we recover a direct method based on the computation of time derivatives. In practice, with a possibly large sampling time, it can be useful to increase the number of basis functions $N$. The Trotter-Kato approximation theorem (see e.g. \cite[Theorem 4.8]{Engel_Nagel}) implies that $\| (A_N^{T_s}-U_N^{T_s}) f \| \rightarrow 0$ as $N \rightarrow \infty$ for all $f \in \mathcal{F}_N$, so that one can expect that the error $\| (\log A_N^{T_s}- \log U_N^{T_s}) f_j \|$ decreases for larger values $N$. This is confirmed with numerical simulations suggesting that small estimation errors can be obtained with a sampling period $T_s = \mathcal{O}(1)$ provided that $N$ is large enough.}



The above theoretical results are valid only when there is no measurement noise. In presence of noise, the estimator is biased and not consistent, because of the lifting of the data.
However, the algorithm performs well for small measurement noise levels and is also shown to be robust to process noise in Section \ref{sec:examples}.

\section{Extensions}
\label{sec:extensions}

We now consider several extensions of the proposed method, which allow to identify open systems driven by a known input or a white noise (i.e. process noise) and to identify systems with non-polynomial vector fields.

\subsection{Systems with inputs}

Consider an open dynamical system of the form
\begin{equation}
\label{input_syst}
\dot{\ve{x}} = \ve{F}(\ve{x},\ve{u}(t))
\end{equation}
with $\ve{x} \in \mathbb{R}^n$ and with the input $u \in \mathcal{U}:\mathbb{R}^+ \to \mathbb{R}^p$. We assume that the vector field consists of monomials in $\ve{x}$ and $\ve{u}$. We define the associated flow $\varphi:\mathbb{R}^+ \times \mathbb{R}^n \times \mathcal{U}$ so that $t \mapsto \varphi(t,\ve{x},\ve{u(\cdot)})$ is a solution of \eqref{input_syst} with the initial condition $\ve{x}$ and the input $\ve{u(\cdot)}$. Following the generalization proposed in \cite{Brunton_ident_input,Proctor_input,Proctor_DMD}, we consider observables $f:\mathbb{R}^n \times \mathbb{R}^p \to \mathbb{R}$ and define the semigroup of Koopman operators
\begin{equation*}
U^t f(\ve{x,u}) = f(\varphi^t(\ve{x,u(\cdot) = \ve{u}}),\ve{u})
\end{equation*}
where $\ve{u(\cdot)} = \ve{u}$ is a constant input. In this case, $\ve{u}$ can be considered as additional state variables and the above operator is the classic Koopman operator for the augmented system $\dot{\ve{x}}=\ve{F(x,u)}$, $\dot{\ve{u}} = \ve{0}$. In particular, the infinitesimal generator is still given by \eqref{inf_gen}.

It follows that the method proposed in Sections \ref{sec:lifting_data} and \ref{sec:ident_Koopman} can be used if 
\begin{equation*}
\varphi^{T_s} (\ve{x,u(\cdot)}) \approx \varphi^{T_s} (\ve{x,u(\cdot)=u}(0))\,.
\end{equation*}
This condition holds when the input can be considered as constant between two snapshots (zero-order hold assumption), or equivalently if the sampling rate is high enough. The matrix $\ve{\overline{U}}_N$ is now obtained with snapshot pairs $([\ve{x}_k,\ve{u}_k],[\ve{y}_k,\ve{u}_k]) \in \mathbb{R}^{(n+p) \times 2}$ and the rest of the procedure follows on similar lines with the augmented state space $\mathbb{R}^{n+p}$. In this case, the identification method not only provides the vector field coefficients associated with the state $\ve{x}$, but also those associated with the input $\ve{u}$. The efficiency of the method is illustrated in Section \ref{sec:example_extension}.

\subsection{Process noise}

We have considered so far only measurement noise. We show that the proposed method is also robust to process noise. Consider a system described by the stochastic differential equation
\begin{equation}
\label{syst_stoch}
dx = F(\ve{x}) dt + \sigma \, dw(t)
\end{equation}
where $w(t)$ is the Wiener process. We define the flow $\varphi:\mathbb{R}^+ \times \mathbb{R}^n \times \Omega \to \mathbb{R}^n$, where $\Omega$ is the probability space, such that $t \mapsto \varphi(t,\ve{x},\omega)$ is a solution to \eqref{syst_stoch}. In this case, the semigroup of Koopman operators is defined by (see e.g. \cite{Mezic})
\begin{equation*}
U^t f (\ve{x}) = \mathbb{E}[f(\varphi(t,\ve{x},\omega))]
\end{equation*}
and its infinitesimal generator is given by
\begin{equation}
\label{eq:gen_stoch}
L f = \ve{F} \cdot \nabla f + \frac{\sigma^2_{proc}}{2} \Delta f
\end{equation}
where $\Delta=\sum_k \partial^2/\partial x_k^2$ denotes the Laplacian operator that accounts for diffusion. The infinitesimal generator is related to the so-called Kolmogorov backward equation.

The numerical scheme of the proposed identification method does not need to be adapted to take process noise into account. As explained in \cite{Rowley_EDMD}, the first step of the method (Section \ref{sec:lifting_data}) is still valid for identifying the matrix $\ve{\overline{U}}$. In the second step (Section \ref{sec:ident_Koopman}), the procedure is the same, except that one has to consider the infinitesimal generator whose matrix representation is given by $\ve{\overline{L}}  + \sigma^2/2 \ve{\overline{D}}$, where $\ve{\overline{D}}$ is the matrix representation of the Laplacian operator. For all $l$ such that $p_l(\ve{x})=x_j$ for some $j$, $\Delta p_l=0$ so that the $l$th column of $\ve{\overline{D}}$ contains only zeros. It follows that we can still use \eqref{eq:coeff_main} to compute the vector field coefficients. In Section \ref{sec:example_extension}, an example illustrates the robustness of the method against process noise.

Similar methods are also considered with manifold learning techniques (e.g. diffusion maps) for state estimation \cite{Shnitzer} and embedded vector field estimation \cite{Perrault}. Although developed in another context to solve different problems, these techniques are similar to our method in the sense that they rely on the backward Kolmogorov equation, which is directly connected to the infinitesimal generator \eqref{eq:gen_stoch}.

\subsection{Non polynomial vector fields}
\label{sec:other_vec}

The method can be adapted to identify non polynomial vector fields of the form \eqref{eq:vec_field}, where the library functions $h_k$ are not monomials. In this context, the vector field and the library functions do not need to be analytic. In this case, one could consider the equality
\begin{equation*}
P_N L = \sum_{j=1}^n \sum_{k=1}^{N_F} w^j_k \, P_N L^j_k
\end{equation*}
with the operators $L^j_k = h_k \partial/\partial x_j$. The final-dimensional representation of this equality yields a matrix equation that should be solved to compute the coefficients $w^j_k$.

However, using the projection $P_N$ adds an additional error to the finite-dimensional approximation of the operator. Instead, we prefer to consider an \guillemets{augmented} subspace that contains the library functions:
\begin{equation*}
\mathcal{F}_{N}' = \mathcal{F}_{N} 
\times \textrm{span}\left(\left\{h_k\right\}_{k=1}^{N_F} \right)
\end{equation*}
where $\mathcal{F}$ is a subspace spanned by monomials. In this case, we can still use Algorithm \ref{alg:lifting} with the projection \mbox{$P'_N:\mathcal{F} \to \mathcal{F}_{N}'$.} \blue{The result of Theorem \ref{theo1} could also be extended to this case, provided that the set of basis functions is complete in $L^2$.}

We finally note that a more straightforward method is to perform a least squares regression on the values of the vector field at the sample points, values which can be obtained according to Remark \ref{rem:estim_vec}. However, numerical experiments suggest that this method is less efficient than the above-mentioned method.

\section{A dual lifting method for large systems}
\label{sec:dual}

A major limitation of the main method presented in Section \ref{sec:lifting} (Algorithm \ref{alg:lifting}) is that it might require a large number of data points. Indeed, the number of data points must be larger than the number of basis functions ($K \geq N$) to ensure that the discrete orthogonal projection \eqref{eq:proj} is well-defined. In the case of high-dimensional systems in particular, the number of basis functions is huge and is likely to exceed the number of available data points, an issue which might be critical in fields such as biology. Moreover, the algorithm might also be computationally intractable (e.g. computation of the matrix logarithm in \eqref{eq:L_data}). In this section, we circumvent the above limitations by proposing a dual approach, which is developed in a $K$-dimensional ``sample space'' instead of the $N$-dimensional functional space. This method can be used when the number of basis functions is larger than the number of data points, i.e. $N \geq K$.

\subsection{Description of the method}

Similarly to the main lifting method, the dual method consists of three steps: lifting of the data, identification of the Koopman operator, and identification of the vector field. In the last step, the algorithm provides the value of the vector field at each data point, so that the dual method can be seen as an indirect method for time derivatives estimation. This is similar in essence to the vector field estimation detailed in Remark \ref{rem:estim_vec}. The identification is achieved in a distributed way, a feature which makes the algorithm computationally tractable in the case of high-dimensional systems and well-suited to parallel computing.

\subsubsection{First step - lifting of the data} This step is similar to the first step of the main method (Section \ref{sec:lifting_data}). But in this case, choosing the basis functions equal to the library functions of the vector field is not more convenient for the next steps. Even if the vector field is polynomial, we can therefore consider other basis than monomials, such as Gaussian radial basis functions $g_k(\ve{x}) = e^{-\gamma \|\ve{x}-\ve{x}_k\|^2}$ with $k=1,\dots,K$ and where $\gamma>0$ is a parameter. We construct the data $K \times N$ matrices
\begin{equation}
\label{mat_P_dual}
\ve{P_x} = \mat{c}{\ve{g}(\ve{x}_1)^T \\ \vdots \\ \ve{g}(\ve{x}_K)^T}
\qquad 
\ve{P_y} = \mat{c}{\ve{g}(\ve{y}_1)^T \\ \vdots \\ \ve{g}(\ve{y}_K)^T}
\end{equation}
where $\ve{g}$ is the vector of basis functions $g_k$. When using Gaussian radial basis functions, the number of basis functions is equal to the number of samples (i.e. $N=K$) and therefore does not depend on the dimension $n$. This is particularly useful in the case of high-dimensional systems, where the matrices \eqref{mat_P_dual} should be of reasonable size. In Section \ref{sec:examples}, we will only use Gaussian radial basis functions.

\subsubsection{Second step - identification of the Koopman operator} We use a dual matrix representation of the Koopman operator, which is inspired (but slighted different, see Remark \ref{rem:kernel}) from a kernel-based approach developed in \cite{Williams_kernel} (kernel EDMD).

In the main method, we constructed the $N\times N$ matrix $\ve{\overline{U}}_N \approx \ve{P_x^\dagger} \, \ve{P_y}$ which represents the operator $U_N$. Instead, we can consider the $K \times K$ matrix representation
\begin{equation}
\label{eq:hat_U}
\ve{\widetilde{U}}_K \approx \ve{P_y} \, \ve{P_x^\dagger} = \ve{P_x} \, \ve{\overline{U}}_N \, \ve{P_x^\dagger} \,,
\end{equation}
a construction which is similar to the original formulation of the Dynamic Mode Decomposition (DMD) algorithm\footnote{This would correspond exactly to DMD if the basis functions $g_j$ were replaced by functions $g_j(x)=\varphi^{(j-1) t_s}(x)$.} \cite{Tu}. The matrix $\ve{P_x}$ can be interpreted as a change of coordinates, and $\ve{\widetilde{U}}_K$ appears to be the matrix representation of $U^{T_s}$ in the ``sample space'': for all $f \in \mathcal{F}_N$, we have
\begin{equation}
\label{eq:Utilde_def}
\mat{c}{U^{T_s} f(\ve{x}_1) \\ \vdots \\ U^{T_s} f(\ve{x}_K)} \approx \ve{\widetilde{U}}_K \mat{c}{f(\ve{x}_1) \\ \vdots \\ f(\ve{x}_K)} \,.
\end{equation}

We have seen that the $j$th column $\ve{c}_j$ of $\ve{\overline{U}}_N$ satisfies $\ve{P_x} \ve{c}_j \approx (p_j(\ve{y}_1) \, \cdots \, p_j(\ve{y}_K))^T$ and corresponds to the projection \eqref{eq:proj} of $U^{T_s} p_j$ on $\mathcal{F}_N$ (expressed in the basis of functions). Each of the $K$ data points yields a constraint and there are $N$ unknowns, so that $K \geq N$ is required. In contrast, it follows from \eqref{eq:Utilde_def} that the $i$th row $\ve{r}_i$ of $\ve{\widetilde{U}}_K$ can be seen, for all $f$, as the coefficients of the linear combination of the values $f(\ve{x}_1),\dots,f(\ve{x}_K)$ that is equal to $U^{T_s} f(\ve{x}_i)$. 
The row $\ve{r}_i$ satisfies $r_i \ve{P_x} \approx (g_1(\ve{y}_i) \, \cdots \, g_N(\ve{y}_i))$, i.e. $r_i$ is obtained by considering the $N$ \guillemets{test} functions $g_j$. In this case, each of the $N$ functions yields a constraint and there are $K$ unknowns, so that $K \leq N$ is required.


\begin{remark}
\label{rem:kernel}
Following similar lines as in \cite{Williams_kernel}, we note that we have
\begin{equation*}
\ve{\widetilde{U}}_K \approx \ve{P_y} \ve{P_x^\dagger} = \ve{P_y} \ve{P_x^T} (\ve{P_x}\ve{P_x^T})^\dagger \triangleq \ve{A \, G^\dagger}
\end{equation*}
where the entries of $\ve{A}$ and $\ve{G}$ can be interpreted as the inner products
\begin{equation*}
[\ve{A}]_{ij} = \ve{p}(\ve{x}_j)^T \ve{p}(\ve{y}_i)\, , \quad [\ve{G}]_{ij} = \ve{p}(\ve{x}_j)^T \ve{p}(\ve{x}_i)
\end{equation*}
(Here, we consider without loss of generality that the matrices $\ve{P_x}$ and $\ve{P_y}$ are constructed with monomials.) The inner products can be approximated by a Gaussian kernel function $g(\ve{x}_i,\ve{x}_j)=g_j(\ve{x}_i)$, so that 
\begin{equation*}
[\ve{A}]_{ij} = g(\ve{x}_i,\ve{x}_j) \, , \quad [\ve{G}]_{ij} = g(\ve{y}_i,\ve{x}_j) \,.
\end{equation*}
In this context, constructing $\ve{P_x}$ and $\ve{P_y}$ with Gaussian radial basis functions is equivalent to constructing the inner-product matrices $\ve{A}$ and $\ve{G}$.\\
At this point, we can note that our matrix representation $\ve{\widetilde{U}}_K$ is slightly different from the representation used for kernel EDMD in \cite{Williams_kernel}, which is given by
\begin{equation*}
\ve{G^\dagger \, A} = (\ve{P_x^T})^\dagger \ve{P}_\ve{x}^\dagger \ve{P_y} \ve{P_x^T} \neq \ve{\widetilde{U}}_K\,.
\end{equation*}
\hfill $\diamond$
\end{remark}

Finally, similarly to \eqref{eq:L_data}, we compute the $K \times K$ matrix
\begin{equation}
\label{eq:L_data_hat}
\ve{\widetilde{L}_{data}} = \frac{1}{T_s} \log (\ve{P_y} \, \ve{P_x^\dagger})\,.
\end{equation}

\subsubsection{Third step - identification of the vector field}

Using a similar idea as the one explained in Remark \ref{rem:estim_vec}, we can directly identify the vector field at the different values $\ve{x}_k$ and the coefficients $w_j^k$ are then obtained by solving $n$ separate regression problems.

\paragraph{Computation of the vector field $\ve{F}(\ve{x}_k)$} We assume that $\ve{\widetilde{L}_{data}}$ is an approximation of the matrix representation of $L$ in the sample space and we have
\begin{equation*}
\mat{c}{ \hspace{-0.2cm} \ve{F}(\ve{x}_1) \cdot \nabla f(\ve{x}_1) \\ \vdots  \\ \hspace{-0.2cm} \ve{F}(\ve{x}_K) \cdot \nabla f(\ve{x}_K)} = \mat{c}{ \hspace{-0.2cm} L f(\ve{x}_1) \hspace{-0.2cm} \\ \vdots \\ \hspace{-0.2cm}  L f(\ve{x}_K) \hspace{-0.2cm} } \approx \ve{\widetilde{L}_{data}} \mat{c}{\hspace{-0.2cm} f(\ve{x}_1) \\ \vdots \\ \hspace{-0.2cm} f(\ve{x}_K)} \,.
\end{equation*}
Considering the above equality with the identity function $\ve{f}(\ve{x})=\ve{x}$, we obtain an approximation $\ve{\hat{F}}$ of the vector field that is given by
\begin{equation}
\label{eq:ident_vec_dual}
\mat{c}{\ve{\hat{F}}(\ve{x}_1)^T \\ \vdots \\ \ve{\hat{F}}(\ve{x}_K)^T } \approx \ve{\widetilde{L}_{data}} \mat{c}{\ve{x}_1^T \\ \vdots \\ \ve{x}_K^T } \,.
\end{equation}
The choice of the functions $f$ used to obtain \eqref{eq:ident_vec_dual} is arbitrary. However, considering monomials of degree one is natural and choosing more functions would yield an overconstrained problem which does not necessarily improve the accuracy of the result. Note also that an approach more similar to the main method is to compute (an approximation of) the matrix representation of $L=\ve{F} \cdot \nabla$ in the sampling space and compare it with $\ve{\widetilde{L}_{data}}$. However, this does not yield better results.

\paragraph{Computation of the coefficients $w_k^j$} When the value of the vector field is known at every data points, we can find an estimation $\hat{w}_k^j$ of the coefficients $w_k^j$ by solving a regression problem. This problem is decoupled: for each $j=1,\dots,n$, we have to solve
\begin{equation*}
\hat{F}_j(\ve{x}_k) = \sum_{l=1}^{N_F} \hat{w}_j^l \, h_l(\ve{x}_k) \qquad k=1,\dots,K\,,
\end{equation*}
which takes the form
\begin{equation}
\label{eq:regression}
\mat{c}{\hat{F}_j(\ve{x}_1) \\ \vdots \\ \hat{F}_j(\ve{x}_{K})} = \ve{H_x} \mat{c}{\hat{w}_1^j \\ \vdots \\ \hat{w}_{N_F}^j}
\end{equation}
with
\begin{equation}
\label{mat_H_x}
\ve{H_x} = \mat{c}{\ve{h}(\ve{x}_1)^T \\ \vdots \\ \ve{h}(\ve{x}_K)^T}
\end{equation}
and where $\ve{h}$ is the vector of library functions $h_k$ of the vector field. Since we do not make any assumption on the vector field, which might not be polynomial or even analytic, the library functions are not necessarily monomials.

Since we can reasonably assume that most coefficients are zero, we can promote sparsity of the vector of coefficients $\hat{w}_j^l$ by adding a penalty term, which yields the Lasso optimization problem \cite{Lasso}
\begin{equation}
\label{eq:Lasso}
\min_{\ve{w} \in \mathbb{R}^{N_F}} \left \|\ve{H_x} \ve{w} - \mat{c}{\hat{F}_j(\ve{x}_1) \\ \vdots \\ \hat{F}_j(\ve{x}_{K})} \right\|_2^2 + \rho \|\ve{w}\|_1 
\end{equation}
where $\rho$ is a positive regularization parameter. Other techniques could also be used to infer $\ve{w}$ from the values of the vector field (see e.g. \cite{Brunton, Wei}). More generally, machine learning techniques could also be used to solve the regression problem \eqref{eq:regression}.

\subsection{Algorithm}

The dual method is summarized in Algorithm \ref{alg:dual}.
\begin{algorithm}[h]
	\caption{Dual lifting method for nonlinear system identification}
	\label{alg:dual}
	\begin{algorithmic}[1]
\Statex{\bf Input:} Snapshot pairs $\{(\ve{x}_k,\ve{y}_k)\}_{k=1}^K$, $\ve{x}_k \in \mathbb{R}^n$; basis functions $\{g_k\}_{k=1}^N$ (with $N \geq K$) ; library functions $\{h_k\}_{k=1}^{N_F}$.
\Statex{\bf Output:} Estimates $\ve{\hat{F}}(\ve{x}_k)$ and $\hat{w}_k^j$.
\State Construct the $K\times N$ matrices $\ve{P_x}$ and $\ve{P_y}$ defined in \eqref{mat_P_dual}
\State Compute the $K \times K$ matrix $\ve{\widetilde{L}_{data}}$ defined in \eqref{eq:L_data_hat}
\State Obtain $\ve{\hat{F}}(\ve{x}_k)$ with \eqref{eq:ident_vec_dual}
\State Construct the $K \times N_F$ matrices $\ve{H_x}$ defined in \eqref{mat_H_x}
\State For each $j$, solve the regression problem \eqref{eq:regression}, e.g. solve the Lasso problem \eqref{eq:Lasso}, to obtain $\hat{w}_k^j$
	\end{algorithmic}
\end{algorithm}

\subsection{Theoretical results}
\label{sec:theo_dual}

We now show the convergence of Algorithm \ref{alg:dual} in optimal conditions. \blue{Let $X \subset \mathbb{R}^n$ be a compact set and $\mathcal{F} = C(X)$ (where $\| \cdot \|$ is the $L^\infty$ norm). It is easy to verify that the Koopman semigroup $U^t: \mathcal{F} \to \mathcal{F}$ is strongly continuous. Assuming that the data points $\ve{x}_k \in X$ are uniformly randomly distributed, we consider the linear functionals $\xi_k: f \mapsto \xi_k(f)=f(\ve{x}_k)$ which span a subspace $\widetilde{\mathcal{F}}_K^*$ of the dual space $\mathcal{F}^*$ of $\mathcal{F}$. We can define the discrete projection operator $\widetilde{P}_K^*:\mathcal{F}^* \to \mathcal{F}^*_K$ by
\begin{equation}
\label{eq:proj_dual}
\widetilde{P}_K^* \xi = \underset{\tilde{\xi} \in \textrm{span}\{\xi_1,\dots,\xi_K\}}{\textrm{argmin}} \sum_{l=1}^N |\xi(g_l) - \tilde{\xi}(g_l)|^2\,.
\end{equation}
The adjoint (projection) operator $\widetilde{P}_K:\mathcal{F} \to \mathcal{F}$ of $\widetilde{P}_K^*$ (i.e. $\widetilde{P}_K^* \xi(f) = \xi(\widetilde{P}_K f)$ for all $f\in \mathcal{F}$, $\xi \in \mathcal{F}^*$) satisfies
\begin{equation*}
(\widetilde{P}_K f)(\ve{x}_k) = \xi_k(\widetilde{P}_K f) = \widetilde{P}^*_K \xi_k(f) = \xi_k(f)= f(\ve{x}_k)\,.
\end{equation*}
Moreover, for all $\xi \neq 0$ such that $\xi(g_l)=0$ $\forall l$, we have $\widetilde{P}^*_K \xi = 0$ and equivalently $\xi(\widetilde{P}_K f)=0$ for all $f\in \mathcal{F}$ so that $\widetilde{P}_K f \in \textrm{span}\{g_l\}_{l=1}^N$. It follows that \mbox{$\widetilde{P}_K:\mathcal{F} \to \widetilde{\mathcal{F}}_K$} is a projection onto a subspace $\widetilde{\mathcal{F}}_K \subseteq \textrm{span}\{g_l\}_{l=1}^N$ that is obtained through interpolation with collocation points $\ve{x}_k$. Finally the matrix $\widetilde{\ve{U}}_K$ can be interpreted as the matrix representation of the finite-rank operator \mbox{$\widetilde{U}^*_K=\widetilde{P}_K^* (U^{T_s})^* |_{\widetilde{\mathcal{F}}_K^*}:\widetilde{\mathcal{F}}_K^* \to \widetilde{\mathcal{F}}_K^*$} in the sense that $\widetilde{U}^*_K \xi = \ve{c}^T \widetilde{\ve{U}}_K \Xi$ for all $\xi = \ve{c}^T \Xi \in \widetilde{\mathcal{F}}_K^*$ (with $\Xi=(\xi_1 \cdots \xi_K)^T$). It follows that we have
\begin{equation*}
\mat{c}{U^{T_s} f(\ve{x}_1) \\ \vdots \\ U^{T_s}f(\ve{x}_K)} \approx \mat{c}{\widetilde{U}^*_K \xi_1(f) \\ \vdots \\ \widetilde{U}^*_K \xi_K(f)} = \widetilde{\ve{U}}_K \mat{c}{f(\ve{x}_1) \\ \vdots \\ f(\ve{x}_K)}
\end{equation*}
and we recover \eqref{eq:Utilde_def}. Equivalently, the matrix $\widetilde{\ve{U}}_K$ can be seen as the representation of $\widetilde{P}_K U^{T_s} |_{\widetilde{\mathcal{F}}_K}:\widetilde{\mathcal{F}}_K \to \widetilde{\mathcal{F}}_K$ with the basis functions $f_k \in \widetilde{\mathcal{F}}_K$ such that $f_k(\ve{x}_j)=\delta_{kj}$.}

\blue{
We will assume that
\begin{equation}
\label{eq:cond_gl}
\mu \left\{\ve{x} \in X \,|\,\sum_{l=1}^\infty c_l g_l(\ve{x}) = 0 \right\} = 0 \quad \forall (c_1,c_2,\dots) \neq \ve{0}\,,
\end{equation}
where $\mu$ denotes the Lebesgue measure. This independence condition (see also \cite{Korda_convergence}) ensures that the projections $\widetilde{P}_K^*$ and $\widetilde{P}_K$ are well-defined with probability one as $K\rightarrow \infty$.
}

The following result shows that Algorithm \ref{alg:dual} provides exact estimates $\hat{\ve{F}}(\ve{x}_k)$ of the vector field in optimal conditions.
\begin{theorem}
\label{theo2}
Assume that the sample points $\ve{x}_k$ are uniformly randomly distributed in a compact forward invariant set $X$, and consider $\ve{y}_k = \varphi^{T_s}(\ve{x}_k)$ (no measurement noise) where $\varphi^t$ is a (invertible and nonsingular) flow generated by the dynamics $\ve{\dot{x}}=\ve{F(x)}$.

\blue{If Algorithm \ref{alg:dual} is used with $N=K$ basis functions \mbox{$g_l \in C^1(X)$} such that \eqref{eq:cond_gl} holds and the identity function \mbox{$f_j(\ve{x})=x_j$} is in the span of $\{g_l\}_{l=1}^N$, then
\begin{equation*}
\lim_{T_s \rightarrow 0} \left|F_j(\ve{x}_k)-\hat{F}_j(\ve{x}_k)\right| = 0 \quad \forall k
\end{equation*}
with probability one.
}

\blue{
If Algorithm \ref{alg:dual} is used with $N \geq K$ basis functions \mbox{$g_l \in C^1(X)$} such that \eqref{eq:cond_gl} holds and
$\lim_{K \rightarrow \infty} \lim_{N \rightarrow \infty} \|\widetilde{P}_K f_j-f_j\|_L=0$
(with the graph norm $\|f\|_L=\|f\|+\|Lf\|$), then
\begin{equation*}
\lim_{K \rightarrow \infty} \lim_{N \rightarrow \infty} \lim_{T_s \rightarrow 0} \left|F_j(\ve{x}_k)-\hat{F}_j(\ve{x}_k)\right| = 0 \quad \forall k
\end{equation*}
with probability one.
}
\end{theorem}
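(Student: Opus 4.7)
The plan is to follow the structure of the proof of Theorem \ref{theo1}, but with the interpolation projection $\widetilde{P}_K$ playing the role of the orthogonal projection $P_N$, working now in $\mathcal{F} = C(X)$ equipped with the supremum norm. As a first ingredient, I would verify that the Koopman semigroup $U^t$ is strongly continuous on $C(X)$: since $X$ is compact forward invariant and $\varphi^t$ is continuous, $f \circ \varphi^t \to f$ uniformly on $X$ as $t \to 0$ for every $f \in C(X)$, and $U^t$ is a bounded operator with $\|U^t f\| \leq \|f\|$. The independence condition \eqref{eq:cond_gl} then guarantees, with probability one, that $\widetilde{P}_K$ and $\widetilde{P}_K^*$ are well-defined (this is the analogue of the remark already used in the proof of Theorem \ref{theo1} and borrowed from \cite{Korda_convergence}).

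Next, mirroring the operator-theoretic calculation in the proof of Theorem \ref{theo1}, I would introduce $\widetilde{A}_K^{T_s} = e^{\widetilde{P}_K L \widetilde{P}_K T_s}: \widetilde{\mathcal{F}}_K \to \widetilde{\mathcal{F}}_K$ and compare it to $\widetilde{U}_K^{T_s} = \widetilde{P}_K U^{T_s}|_{\widetilde{\mathcal{F}}_K}$. The same chain of inequalities (integrating $\frac{d}{d\tau}(\widetilde{A}_K^\tau \widetilde{U}_K^{T_s-\tau})$ and exploiting the fact that $L$ commutes with $U^{T_s-\tau}$, together with strong continuity and $\widetilde{P}_K f = f$ for $f \in \widetilde{\mathcal{F}}_K$) gives $\frac{1}{T_s}\|(\widetilde{A}_K^{T_s} - \widetilde{U}_K^{T_s}) f\| \to 0$ for every $f \in \widetilde{\mathcal{F}}_K$. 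The L'H\^opital-style expansion $\log M^{T_s} = (M^{T_s}-I) + o(T_s)$ for both $M=\widetilde{A}_K$ and $M=\widetilde{U}_K$ then yields $\frac{1}{T_s}\|(\log \widetilde{U}_K^{T_s} - \log \widetilde{A}_K^{T_s}) f\| \to 0$, so that $\widetilde{\ve{L}}_{\ve{data}}$ converges (as $T_s \to 0$) to the matrix representation of $\widetilde{P}_K L \widetilde{P}_K$ in the sample space.

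The crux is then the interpolation property of $\widetilde{P}_K$: by construction $(\widetilde{P}_K h)(\ve{x}_k) = h(\ve{x}_k)$ for every $h \in \mathcal{F}$. For Part 1, since $f_j \in \mathrm{span}\{g_l\}_{l=1}^N$ and $N=K$, the interpolation is unique and $\widetilde{P}_K f_j = f_j$, hence $\widetilde{P}_K L \widetilde{P}_K f_j = \widetilde{P}_K L f_j = \widetilde{P}_K F_j$, and the interpolation property forces $(\widetilde{P}_K F_j)(\ve{x}_k) = F_j(\ve{x}_k)$ exactly. Combining this with the convergence from the previous paragraph and the algorithm's definition $\hat{F}_j(\ve{x}_k) = [\widetilde{\ve{L}}_{\ve{data}}\, (x_j \text{ at sample points})]_k$ gives the first claim. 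For Part 2, $\widetilde{P}_K f_j \neq f_j$ in general, but applying the interpolation property only to the outer $\widetilde{P}_K$ yields $(\widetilde{P}_K L \widetilde{P}_K f_j)(\ve{x}_k) = (L \widetilde{P}_K f_j)(\ve{x}_k)$. The graph-norm hypothesis $\|\widetilde{P}_K f_j - f_j\|_L \to 0$ then delivers $L \widetilde{P}_K f_j \to L f_j = F_j$ in $L^\infty(X)$, so that pointwise convergence $(L \widetilde{P}_K f_j)(\ve{x}_k) \to F_j(\ve{x}_k)$ holds as $N, K \to \infty$, and the innermost $T_s \to 0$ limit is treated as in Part 1.

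The main obstacle I expect is ensuring that the order of limits in Part 2 is consistent. The bound $\frac{1}{T_s}\|(\widetilde{A}_K^{T_s} - \widetilde{U}_K^{T_s}) f\|$ may depend on $N$ and $K$ through the operator norms of $\widetilde{P}_K$ and the spectrum of $\widetilde{P}_K L \widetilde{P}_K$ (one needs the eigenvalues of $T_s \widetilde{P}_K L \widetilde{P}_K$ to remain in the principal-branch strip $|\Im z| < \pi$, exactly the aliasing caveat discussed after Theorem \ref{theo1}). This is why $T_s \to 0$ must be taken innermost. A secondary technical point is to verify that passing from the adjoint formulation in $\widetilde{\mathcal{F}}_K^*$ used to define $\widetilde{\ve{U}}_K$ to the primal evaluation at $\ve{x}_k$ used in \eqref{eq:ident_vec_dual} is compatible with taking matrix logarithms, which follows from the fact that $\widetilde{\ve{L}}_{\ve{data}}$ and its adjoint share the same matrix representation in the dual pair of bases.
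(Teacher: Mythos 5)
Your proposal is correct and follows essentially the same route as the paper's proof: the same auxiliary operators $\widetilde{A}_K^{T_s}=e^{\widetilde{P}_K L \widetilde{P}_K T_s}$ and $\widetilde{U}_K^{T_s}$, the same integral comparison and logarithm expansion, the interpolation property $(\widetilde{P}_K h)(\ve{x}_k)=h(\ve{x}_k)$ for Part 1 (the paper justifies $\widetilde{P}_K f_j=f_j$ slightly more formally via linear independence of the sample vectors and Hahn--Banach, but the substance is identical), and the graph-norm hypothesis to pass from $L\widetilde{P}_K f_j$ to $Lf_j=F_j$ in Part 2. No gaps worth flagging.
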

\begin{proof}
\blue{It follows from \eqref{eq:cond_gl} that the discrete projection $\widetilde{P}_K$ is well-defined (with probability one). For a finite integer $K$, consider the finite-dimensional operators $\widetilde{A}_K^{T_s}=e^{\widetilde{P}_K L \widetilde{P}_K T_s}:\widetilde{\mathcal{F}}_K \to \widetilde{\mathcal{F}}_K$ and $\widetilde{U}_K^{T_s}=\widetilde{P}_K U^{T_s}|_{\widetilde{\mathcal{F}}_K}$. We have
\begin{equation*}
\begin{split}
\lim_{T_s \rightarrow 0} \frac{\|\log \widetilde{A}_K^{T_s} - (\widetilde{A}_K^{T_s}-I) \|}{T_s} = 0 \\
\lim_{T_s \rightarrow 0} \frac{\|\log \widetilde{U}_K^{T_s} - (\widetilde{U}_K^{T_s}-I) \|}{T_s} = 0
\end{split}
\end{equation*}
and, for all $f\in \widetilde{\mathcal{F}}_K$,
\begin{equation*}
\begin{split}
& \lim_{T_s \rightarrow 0} \frac{1}{T_s} \left\|(\widetilde{A}_K^{T_s} - \widetilde{U}_K^{T_s} ) f \right\| \\
& \quad = \lim_{T_s \rightarrow 0} \frac{1}{T_s} \int_0^{T_s} \left \| \frac{d}{d\tau} \left(\widetilde{A}_K^{\tau} \widetilde{U}_K^{T_s-\tau} \right) f \right\| d\tau \\
& \quad = \lim_{T_s \rightarrow 0} \frac{1}{T_s} \int_0^{T_s}  \left \| \widetilde{A}_K^{\tau} (\widetilde{P}_K L \widetilde{P}_K-\widetilde{P}_K L) U^{T_s-\tau}  f \right\| d\tau  = 0
\end{split}
\end{equation*}
for all $f\in \mathcal{F}$ since $U^{T_s}$ is strongly continuous (see the details in the proof of Theorem \ref{theo1}). This implies that
\begin{equation*}
\lim_{T_s \rightarrow 0}\frac{1}{T_s}\|(\log \widetilde{A}_K^{T_s} - \log \widetilde{U}_K^{T_s} ) f \| = 0 \qquad \forall f\in \widetilde{\mathcal{F}}_K
\end{equation*}
and it follows that 
\begin{equation}
\label{eq:F_xk_theo2}
\begin{split}
& \lim_{T_s \rightarrow 0} \left| \hat{F}_j(\ve{x}_k) - L \widetilde{P}_K f_j(\ve{x}_k) \right| \\
& \quad \leq \lim_{T_s \rightarrow 0} \left\| \frac{\log \widetilde{U}_K^{T_s} \widetilde{P}_K f_j}{T_s} - \widetilde{P}_K L \widetilde{P}_K f_j \right\| \\
& \quad = \lim_{T_s \rightarrow 0} \frac{1}{T_s} \left\| \log \widetilde{U}_K^{T_s} \widetilde{P}_K f_j - \log \widetilde{A}_K^{T_s} \widetilde{P}_K f_j \right\| = 0
\end{split}
\end{equation}
where $f_j$ is the identity function (with $f_j(\ve{x})=x_j$) and where we used $\widetilde{P}_K f(\ve{x}_k) = f(\ve{x}_k)$. If $K=N$ and since the vectors $(\xi_1(g_l) \cdots \xi_K(g_l))$ are linearly independent with probability one (this follows from \eqref{eq:cond_gl}, see also \cite{Korda_convergence}), \eqref{eq:proj_dual} implies that $\widetilde{P}_K^*\xi(g_l) = \xi(g_l)$ for all $l$, or equivalently $\xi(P_K g_l) = \xi(g_l)$ for all $\xi \in \mathcal{F}^*$, so that $P_K g_l = g_l$  by the Hahn-Banach theorem. If $f_j \in \textrm{span} \{g_l\}_{l=1}^N$, then $\widetilde{P}_K f_j = f_j$ and it follows from \eqref{eq:F_xk_theo2} that
\begin{equation*}
\lim_{T_s \rightarrow 0} \left| \hat{F}_j(\ve{x}_k) - F_j(\ve{x}_k) \right| = \lim_{T_s \rightarrow 0} \left| \hat{F}_j(\ve{x}_k) - L \widetilde{P}_K f_j(\ve{x}_k) \right| = 0 \,.
\end{equation*}
}

\blue{If $\lim_{K \rightarrow \infty} \lim_{N \rightarrow \infty} \|\widetilde{P}_K f_j - f_j \|_L=0$, we have
\begin{equation*}
\begin{split}
& \lim_{K \rightarrow \infty} \lim_{N \rightarrow \infty} \lim_{T_s \rightarrow 0} \left| \hat{F}_j(\ve{x}_k) - F_j(\ve{x}_k) \right| \\
& \quad \leq \lim_{K \rightarrow \infty} \lim_{N \rightarrow \infty} \lim_{T_s \rightarrow 0} \left| \hat{F}_j(\ve{x}_k) - L \widetilde{P}_K f_j(\ve{x}_k) \right| \\
& \qquad + \lim_{K \rightarrow \infty} \lim_{N \rightarrow \infty} \lim_{T_s \rightarrow 0} \left\| L \widetilde{P}_K f_j - L f_j \right\| = 0 \, .
\end{split}
\end{equation*}
This concludes the proof.
}
\end{proof}

\blue{For finite values $K$ and $N$, Theorem \ref{theo2} proves the convergence of Algorithm \ref{alg:dual} as the sampling time goes to zero, provided that the identity function is contained in the span of test functions $g_l$ (case 1). In practice, we will use Gaussian radial basis functions with $K=N$. In this case, $\widetilde{P}_K f_j$ is obtained through the interpolation of $f_j$ on a set of Gaussian radial basis functions and it can be shown that the convergence of $\widetilde{P}_K f_j$ to $f_j$ is uniform with all the derivatives (case 2) (see e.g. \cite{radial_functions_dense_Sobolev}). This basis also satisfies the independence condition \eqref{eq:cond_gl} (see e.g. \cite{Korda_convergence}).}

Table \ref{tab:comparison} summarizes the main differences between the two frameworks (main and dual methods).
\begin{table*}[h!]
		\centering
			\caption{Comparison between the main method and the dual method.}
		\label{tab:comparison}
		{\small
\begin{tabular}{|c|c|c|}
\hline
 & Main method  & Dual method \\ 
\hline
Constraints on data & $K\geq N$ & $K\leq N$ \\
Subspace of functions & Space of observables $\mathcal{F}_N$ & \guillemets{Sample space} \blue{$\widetilde{\mathcal{F}}^*_K$} \\ 
Basis functions & Monomials (i.e. $g_k=p_k$) & Preferably Gaussian RBF $g_k$ \\ 
Projected Koopman operator & $U_N f = P_N U^{T_s} f$ $\forall f \in \mathcal{F}_N$ & \blue{$\widetilde{U}_K^* f = \widetilde{P}_K^* (U^{T_s})^* \xi$ $\forall \xi \in \widetilde{\mathcal{F}}^*_K$} \\ 
Matrix representation & $\ve{\overline{U}}_N = \ve{P_x^\dagger} \, \ve{P_y} \in \mathbb{R}^{N \times N}$ & $\ve{\widetilde{U}}_K = \ve{P_y} \, \ve{P_x^\dagger}\in \mathbb{R}^{K \times K}$ \\
\hline
	\end{tabular}}
\end{table*}

\section{Illustrative examples}
\label{sec:examples}

The goal of this section is to provide several examples to illustrate the two methods, including some extensions of the main method. We do not provide here an extensive study of the performance with respect to the choice of basis functions and parameters, considering that this is out of the scope of the present paper.

We consider simulated data and, unless otherwise stated, we add a Gaussian state-dependent measurement noise with zero mean and standard deviation $\sigma_{meas} = 0.01$ (see \eqref{rel_x_y}).

\subsection{Main method}

We use the lifting method described in Section \ref{sec:lifting}, with the parameters $m=m_F=3$. We consider three systems that exhibit different types of behaviors.
\begin{enumerate}
\item{Van der Pol oscillator:} the dynamics are given by
\begin{eqnarray*}
	\dot{x}_1 & = & x_2 \\
	\dot{x}_2 & = & (1-x_1^2)x_2-\blue{x_1}
\end{eqnarray*}
and possess a stable limit cycle. 
\item{Unstable equilibrium:} the dynamics are given by
\begin{eqnarray*}
	\dot{x}_1 & = & 3 \, x_1+0.5 \, x_2 -x_1 x_2 + x_2^2 + 2 \, x_1^3 \\
	\dot{x}_2 & = & 0.5 \,x_1+4 \, x_2
\end{eqnarray*}
and are characterized by an unstable equilibrium at the origin.
\item{Chaotic Lorenz system:} the dynamics are given by
\begin{eqnarray*}
	\dot{x}_1 & = & 10(x_2-x_1) \\
	\dot{x}_2 & = & x_1(28-x_3)-x_2 \\
	\dot{x}_3 & = & x_1 x_2 - 8/3 \, x_3
\end{eqnarray*}
and exhibit a chaotic behavior.
\end{enumerate}

A set of $K$ data pairs is generated by taking snapshots at times $\{0,T_s,\dots,K/r T_s\}$ from $r$ trajectories of these systems. For the first two systems, we consider a setting that is not well-suited to a direct estimation of the derivatives: the sampling period $T_s$ is (reasonably) large and only two or three data points are taken on each trajectory. The identification of the third system, however, requires a smaller sampling period and a larger number of samples. Parameters used to generate the datasets are summarized in the left part of Table \ref{tab:examples}. 

For each model, Algorithm \ref{alg:lifting} yields the estimates $\hat{w}_k^j$ of the coefficients $w_k^j$ \blue{(Figure \ref{fig:toys_coeff})}. We compute the root mean square error
\begin{equation}
\label{eq:RMSE}
\textrm{RMSE} = \sqrt{\frac{1}{n N_F} \sum_{j=1}^n \sum_{k=1}^{N_F} \left((w_k^j) - (\hat{w}_k^j) \right)^2} 
\end{equation}
and the normalized root mean square error $\textrm{NRMSE} = \textrm{RMSE}/\overline{w}$, where $\overline{w}$ is the average value of the nonzero coefficients $|w_k^j|$. The RMSE and NRMSE values averaged over $50$ experiments are small (Table \ref{tab:examples}) and show that the lifting method achieves good performance to identify each system with a fairly low number of samples. \blue{Figure \ref{fig:toys_pred} shows predictions obtained with the identified vector field. These predictions are good, but some errors (in particular for the chaotic system) are due to measurement noise and finite-dimensional approximations. These results could be improved by increasing the number of basis functions and reducing the sampling period (not shown here). Note that this is for illustrative purposes only, since predictions could also be obtained directly through the lifted dynamics (see e.g. \cite{Korda_MPC}).}

\begin{figure}[h]
	\centering
	\includegraphics[width=0.8\linewidth]{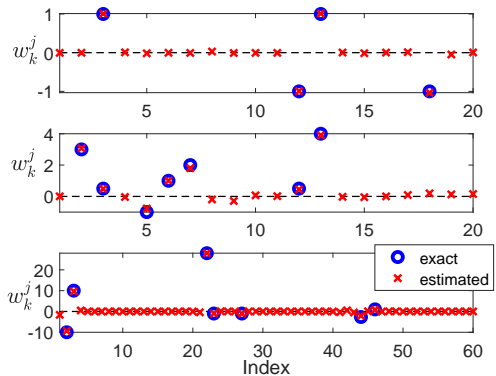}
	\caption{\blue{Vector field coefficients obtained for the Van der Pol oscillator (top), the unstable system (middle), and the chaotic Lorenz system (bottom).}}
	\label{fig:toys_coeff}
\end{figure}

\begin{table*}[t!]
		\centering
			\caption{Features of the datasets and (normalized) root mean square error averaged over $50$ simulations.}
		\label{tab:examples}
		{\small 
	\begin{tabular}{lcccccc}
		\hline 
		\hline 
		& Sampling & Total number & Number of & Initial & \multirow{2}{*}{RMSE} & \multirow{2}{*}{NRMSE} \\ 
		& period ($T_s$) &  of data pairs ($K$) & trajectories ($r$) & conditions & & \\
		\hline 
		1. Van der Pol & 0.5 & 30  & 15 & $[-1,1]^2$ & 0.023 & 0.023 \\ 
		2. Unstable & 0.2 & 20 & 20 & $[-0.5,0.5]^2$ & 0.150 & 0.087 \\ 
		3. Lorenz & 0.033 & 300 & 20 & $[-20,20]^3$ & 0.451 & 0.059 \\ 
		\hline 
		\hline 
	\end{tabular} }
\end{table*}

\begin{figure}[h]
	\centering
	\includegraphics[width=0.8\linewidth]{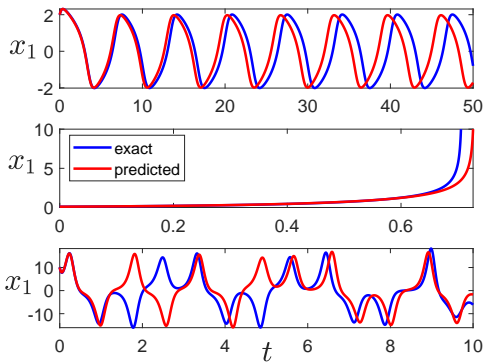}
	\caption{\blue{Prediction of trajectories using the estimated vector field (Van der Pol oscillator (top), unstable system (middle), and chaotic Lorenz system (bottom).}}
	\label{fig:toys_pred}
\end{figure}

For the three systems described above, we also consider the effect of the sampling period $T_s$ on the performance of the method (Figure \ref{fig:error_coeff}). In the noiseless case, the NRMSE decreases (exponentially) as the sampling period decreases. This is in agreement with the fact that the NRMSE tends to zero as $T_s \rightarrow 0$ (Theorem \ref{theo1}). With measurement noise, this is not the case since the method is biased. In this case, small values of the sampling period make the method more sensitive to noise, so that the minimal (nonzero) value of the NRMSE is obtained with an intermediate value of the sampling period.
\begin{figure}[h]
	\centering
	\subfigure[Van der Pol]{\includegraphics[width=0.32\linewidth]{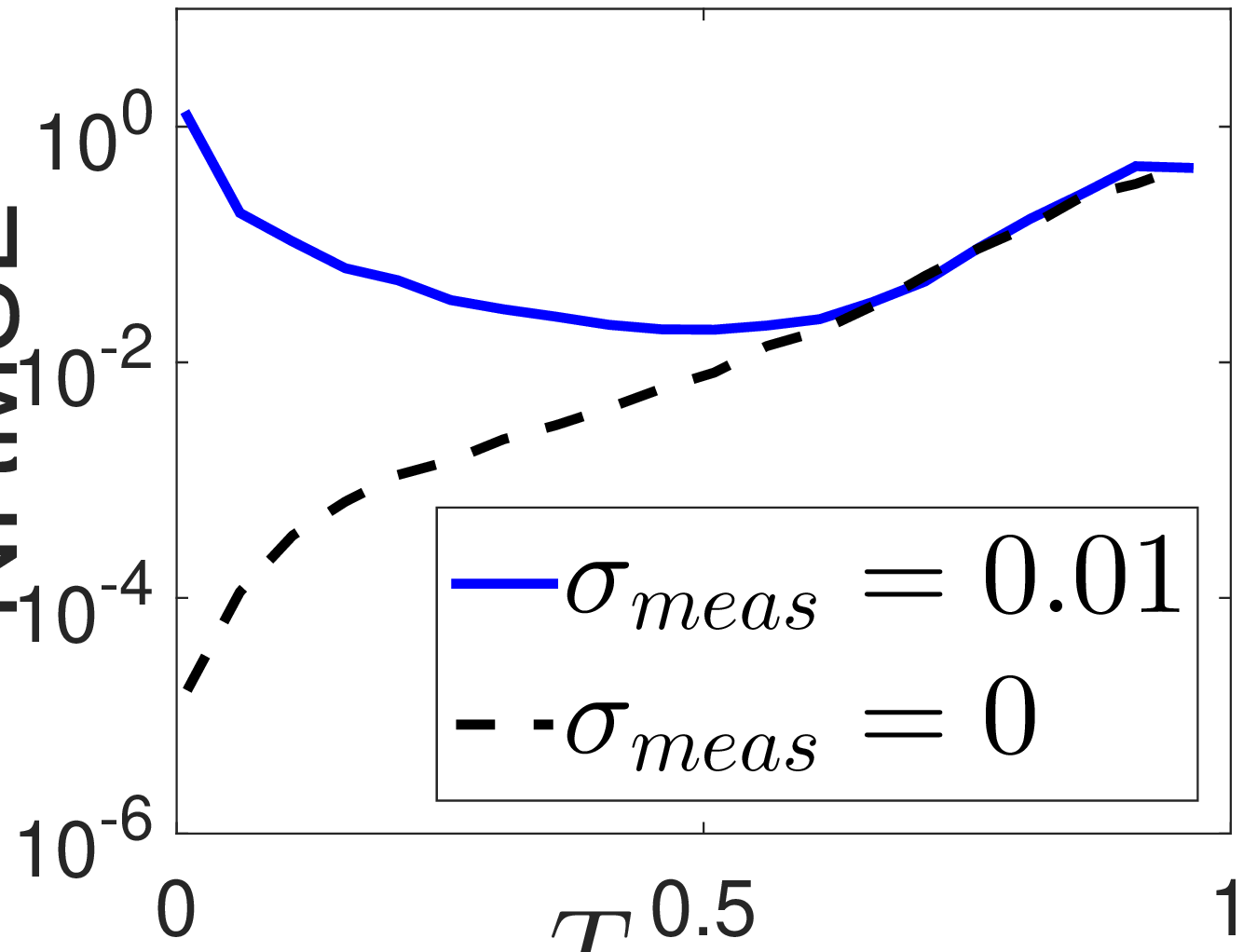}}
	\subfigure[Unstable]{\includegraphics[width=0.32\linewidth]{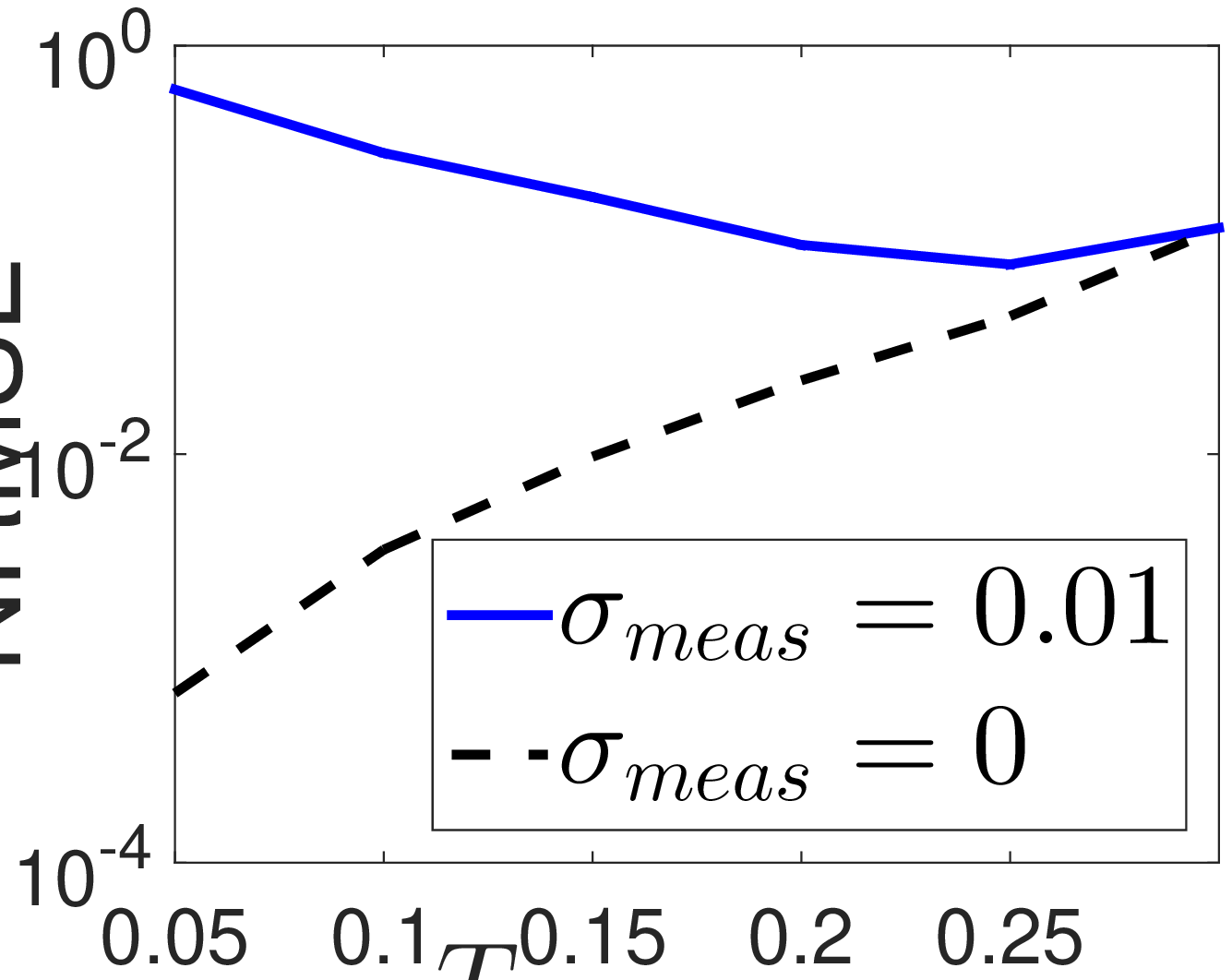}}
	\subfigure[Lorenz]{\includegraphics[width=0.32\linewidth]{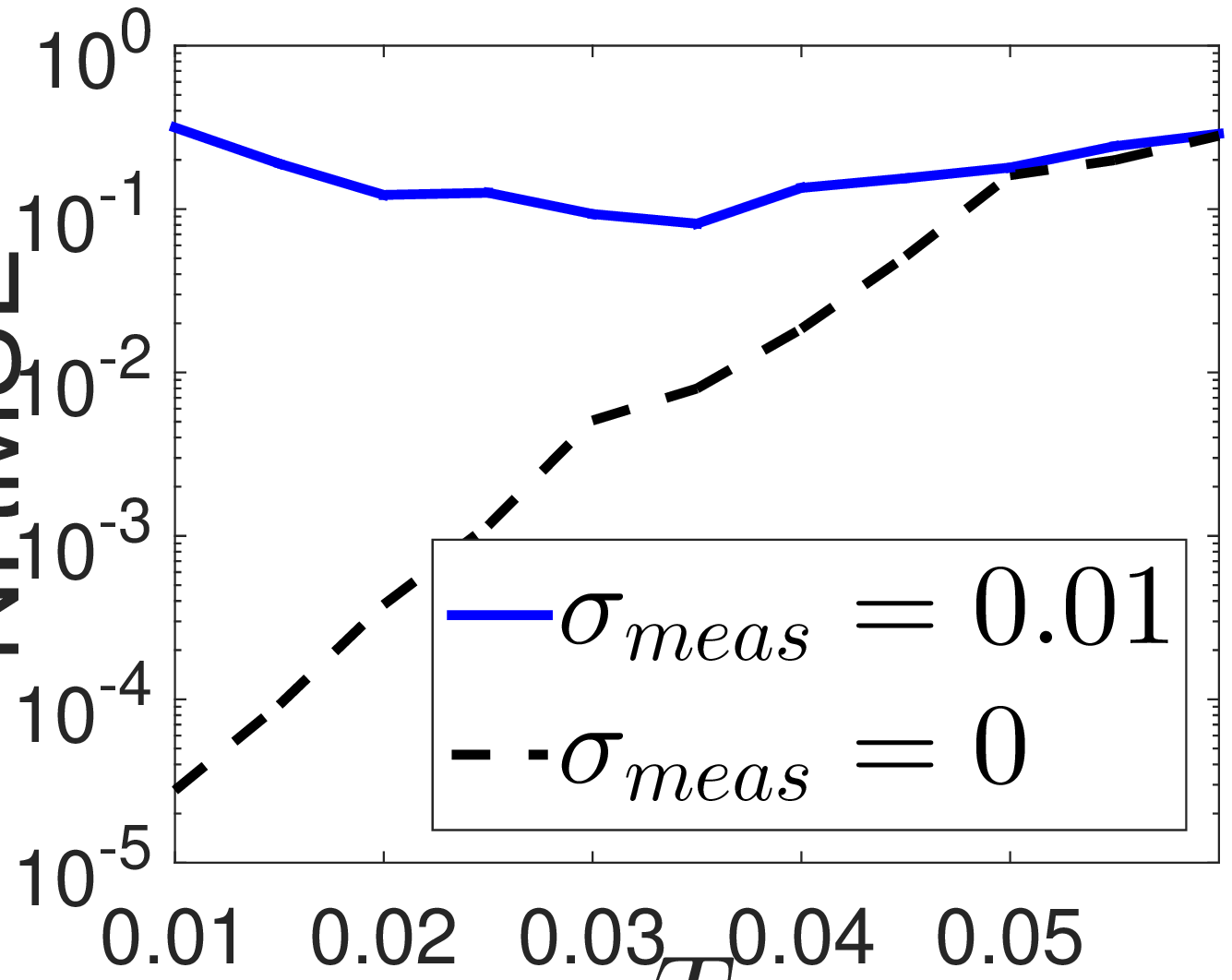}}
	\caption{Effect of the sampling period on the normalized root mean square error (averaged over $50$ experiments). Parameters are given in Table \ref{tab:examples}.}
	\label{fig:error_coeff}
\end{figure}

Next, the approximation of the vector field obtained with \eqref{eq:comput_vec_field} is compared with the approximation obtained directly from data through (central) finite differences, i.e.
\begin{equation*}
\hat{\ve{F}}(\ve{x}_k) = \frac{\ve{x}_{k+1} - \ve{x}_{k-1}}{2 T_s} \,.
\end{equation*}
We consider the three systems and compute the normalized root mean square error on the vector field
\begin{equation*}
\textrm{NRMSE}_{F} = \frac{\sqrt{\frac{1}{K-1}\sum_{j=1}^n \sum_{k=2}^{K} \left\|\hat{\ve{F}}(\ve{x}_k)-\ve{F}(\ve{x}_k)\right\|^2}}{\frac{1}{K-1} \sum_{k=2}^K \left\|\ve{F}(\ve{x}_k) \right\|}
\end{equation*}
averaged over $10$ experiments, for different values of the sampling period. The results are shown in Figure \ref{fig:comp_vec_field}. For each system, we observe that the approximation obtained with the lifting method provides an estimate with an acceptable error (e.g. $NRMSE_F < 0.1$) for larger values of the sampling period than the direct finite difference method. This approximation is also characterized by a clear transition at a critical value of the sampling period, above which the NRMSE sharply increases (not observed with the unstable system, for which the critical value is beyond the maximal integration time). These results demonstrate the need of considering an indirect method to estimate the vector field (and therefore identify the system) when the sampling period is large.

\begin{figure}[h]
	\centering
	\subfigure[Van der Pol]{\includegraphics[width=0.32\linewidth]{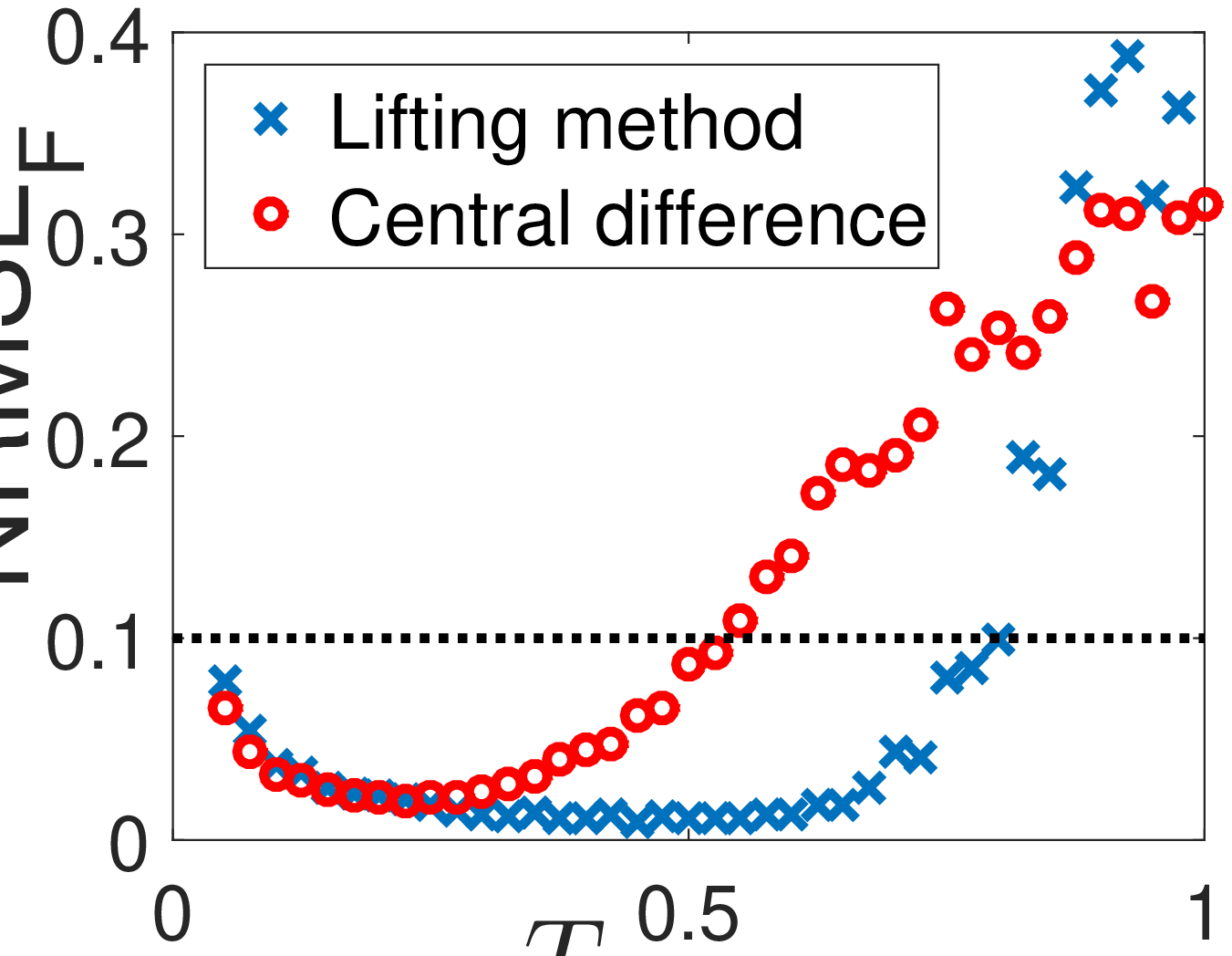}}
	\subfigure[Unstable]{\includegraphics[width=0.32\linewidth]{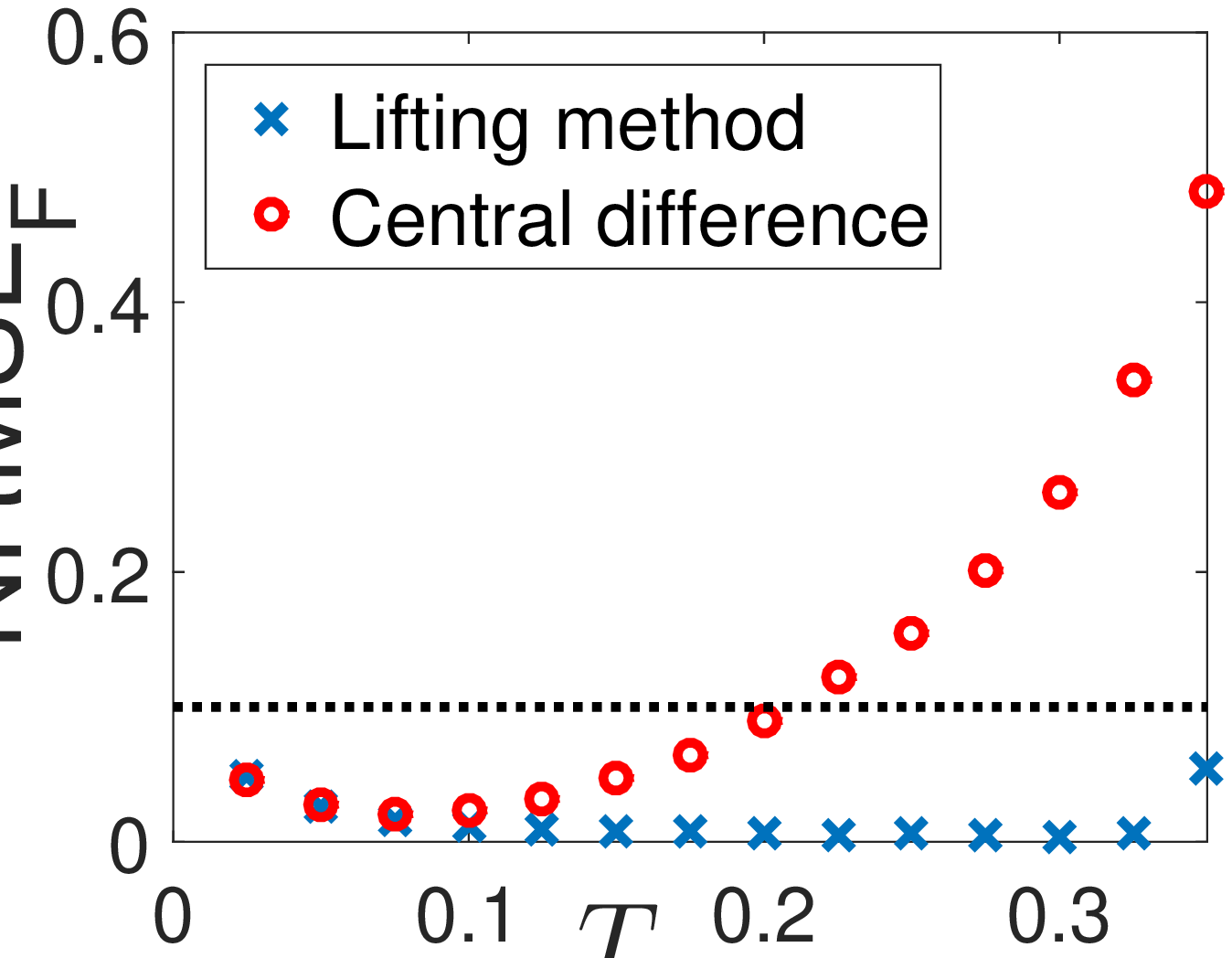}}
	\subfigure[Lorenz]{\includegraphics[width=0.32\linewidth]{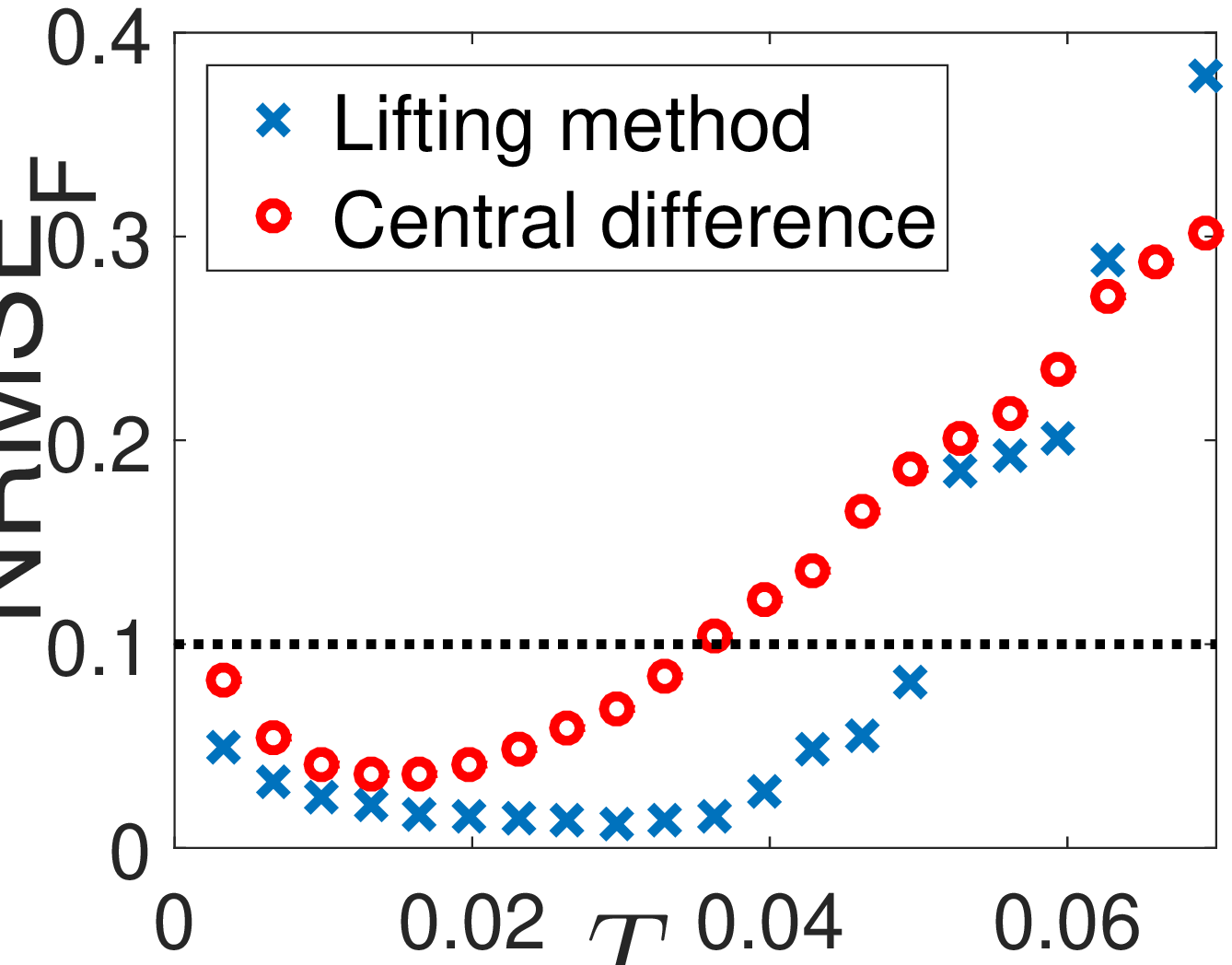}}
	\caption{Comparison of the normalized root mean square error on the vector field estimated with the lifting method and with a finite difference method (averaged over $10$ experiments). The parameters are the same as in Table \ref{tab:examples}, except for the unstable system where $K=40$ ($2$ data pairs on each trajectory) and the initial conditions are in the set $[-0.1,0.1]^2$.}
	\label{fig:comp_vec_field}
\end{figure}

\subsection{Extensions}
\label{sec:example_extension}

We now illustrate several extensions of the lifting method mentioned in Section \ref{sec:extensions}: systems with inputs, process noise, and non-polynomial vector fields.

\subsubsection{Input and process noise} We consider the forced Duffing system
\begin{eqnarray}
\label{Duff1}
\dot{x}_1 & = & x_2 \\
\label{Duff2}
\dot{x}_2 & = & x_1-x_1^3-0.2 \, x_2 + 0.2 \, x_1^2 \, \cos(t)
\end{eqnarray}
and generate $K=250$ snapshot data pairs from $5$ trajectories ($50$ on each), with initial conditions on $[-1,1]^2$. The lifting method provides a good estimation of the vector field (including the forcing term $0.2 \, x_1^2 \, \cos(t)$). The RMSE (see Equation \eqref{eq:RMSE}) and NRMSE computed over all coefficients (including those related to the forcing term) are given in Table \ref{tab:examples_input} for different values of the sampling period. Note that we use again the parameters $m=m_F=3$.

\begin{table}[h]
	\centering
		\caption{(Normalized) root mean square error (averaged over $50$ experiments) related to the identification of the forced Duffing system.}
		\label{tab:examples_input}
	\begin{tabular}{ccc}
		\hline 
		\hline 
		Sampling  & \multirow{2}{*}{RMSE} & \multirow{2}{*}{NRMSE} \\ 
		period ($T_s$) & & \\
		\hline 
		0.2 & 0.032 & 0.046 \\
		0.4 & 0.031 & 0.045 \\
		0.6 & 0.057 & 0.084 \\
		\hline 
		\hline 
	\end{tabular} 
\end{table}

Now, we replace the forcing term in \eqref{Duff2} by the white noise $\eta(t)$ with different values of the standard deviation $\sigma_{proc}$ (note that we still add measurement noise with $\sigma_{meas}=0.01$). We generate $K=500$ snapshot data pairs from $10$ trajectories computed with the Euler-Maruyama scheme, with initial conditions on $[-1,1]^2$. The sampling period is equal to $T_s = 0.2$. As shown in Table \ref{tab:examples_noise}, the error is small even with strong process noise, suggesting that the method is robust against process noise. 

\begin{table}[h]
	\centering
		\caption{(Normalized) root mean square error (averaged over $10$ experiments) related to the identification of the Duffing system with process noise.}
		\label{tab:examples_noise}
	\begin{tabular}{ccc}
		\hline 
		\hline 
		Noise strength  & \multirow{2}{*}{RMSE} & \multirow{2}{*}{NRMSE} \\ 
		 ($\sigma_{proc}$) & & \\
		\hline 
		0.2 & 0.063 & 0.079 \\
		0.4 & 0.065 & 0.082 \\
		0.6 & 0.074 & 0.092 \\
		0.8 & 0.067 & 0.084 \\
		0.1 & 0.094 & 0.117 \\
		\hline 
		\hline 
	\end{tabular} 
\end{table}

\subsubsection{Non polynomial vector fields} In this example, we consider a genetic toggle switch (see e.g. \cite{Gardner})
\begin{eqnarray*}
\dot{x}_1 & = & - x_1 + 2 \, x_2  \\
\dot{x}_2 & = & - x_2 + \frac{2}{1+x_3^2} \\
\dot{x}_3 & = & - 2 \, x_3 + 2 \, x_4 \\
\dot{x}_4 & = & - 2 \, x_4 + \frac{1}{1+x_1^3}
\end{eqnarray*}
and we generate $K=50$ snapshot data pairs from $50$ trajectories, with initial conditions on $[0,1]^4$. The sampling period is $T_s=0.1$. Since the vector field is not polynomial, we use the extension presented in Section \ref{sec:other_vec}. The basis functions are the $5$ monomials of total degree $0$ and $1$ (i.e. $m=1$), to which we add $12$ Hill functions 
\begin{equation}
\label{eq:Hill}
\frac{1}{1+x_k^l} \quad k=\{1,2,3,4\} \,, \quad l=\{1,2,3\} \,.
\end{equation}
When there is no measurement noise, all coefficients (including those related to non-polynomial terms) are inferred correctly and we obtain a NRMSE equal to $0.008$ (averaged over $50$ experiments). However, the results are sensitive to noise in this case. With a measurement noise with $\sigma_{meas}=0.001$, the NRMSE increases to $0.494$. As shown below, the dual method is more robust to noise in this case.

\subsection{Dual method}
\label{sec_example_dual}

We illustrate the dual method in the case of a non-polynomial vector field. The main interest of the method, however, is its use with high-dimensional datasets, where the number of basis functions $N$ is (much) larger than the number of sample points $K$. This will be illustrated in the next section.

The dual method requires to solve a regression problem. When $K < N_F$, we solve the (underconstrained) Lasso problem \eqref{eq:Lasso} with the MATLAB toolbox ``yall1'' \cite{Yall1, Yall1_manual} (L1-L2 problem, with the parameter $\rho=0.01$). When $K \geq N_F$, we solve the (overconstrained) problem \eqref{eq:Lasso} with the MATLAB function ``lasso'' (with the parameter $\lambda=1/K$). Note that the value of the regularization parameter might not be optimal in all cases, and we did not extensively study its effect on the performance of the algorithm. In the following, we only use Gaussian radial basis functions with $\gamma=0.1$ or $\gamma=0.01$. Numerical simulations performed with monomial bases (not shown here) yield similar results for small dimensions, but are less accurate and more computationally expensive for large dimensions.

We consider the toggle switch system introduced in Section \ref{sec:example_extension}. Sample points are generated in the same conditions (i.e. $K=50$, $T_s=0.1$). We consider Gaussian radial basis functions with $\gamma=0.1$ and $17$ library functions ($5$ monomials of total degree $0$ and $1$, and $12$ Hill functions \eqref{eq:Hill}). With no noise, the NRMSE (averaged over $50$ experiments) is equal to $0.064$, which is worse than with the main method (Section \ref{sec:example_extension}). However, we obtain a NRMSE equal $0.117$ with $\sigma_{meas} = 0.001$ and equal to $0.637$ with $\sigma_{meas} = 0.01$. This shows that, in this case, the dual method is more robust to measurement noise than the main method. \blue{This might be due to the sparsity constraint that we impose in the dual method}.

\subsection{Application to network identification}

In the context of dynamical systems, each state can be seen as the node of a network. Moreover, a link can be drawn from node $i$ to node $j$ if the dynamics of the state $x_j$ depends on the state $x_i$. Under the assumption that the vector field is of the form \eqref{eq:vec_field}, there is a link from node $i$ to node $j$ if there is at least one nonzero coefficient $w_k^j$ such that the corresponding library function $h_k$ depends on $x_i$.

Network reconstruction aims at predicting links between states from data, a goal which is equivalent to finding nonzero coefficients $w_k^j$ in our setting. We will consider that estimated coefficients $\hat{w}_k^j$ with a small absolute value are mainly due to measurement noise and have an exact value $w_k^j$ equal to zero. Hence, we decide that a link is present in the network only if the related value $|w_k^j|$ is above a given threshold. To evaluate the performance of the method, one can compute the true positive rate (i.e. number of correctly identified links divided by the actual number of links) and the false positive rate (i.e. number of incorrectly identified links divided by the actual number of missing links). Varying the threshold value, we can plot the true positive rate against the false positive rate, which corresponds to the receiver operating characteristic (ROC) curve. If the area under the ROC curve (AUROC) is close to one, the network inference method provides good results (bad result correspond to a value close to $0.5$).

\subsubsection{Kuramoto oscillators} We consider a network of $n$ Kuramoto phase oscillators
\begin{equation*}
\theta_{i} = \omega_i + \frac{C}{n} \sum_{j=1}^n a_{ij} \sin(\theta_j-\theta_i) \qquad i=1,\dots,n
\end{equation*}
with $\theta_i \in [0,2\pi)$. The coupling strength is set to $C=10$ and the natural frequencies $\omega_i$ are uniformly randomly distributed on $[0,0.1]$. The values $a_{ij}$ are the entries of the weighted adjacency matrix of a random Erd{\H o}s-R\'enyi graph (with a probability $p_{link}=0.3$ for any two nodes to be connected). The link weights are uniformly randomly distributed on $[0,1]$.

For two networks ($n=20$ and $n=100$), we generate $K$ sample pairs from $K/5$ trajectories ($5$ data pairs on each trajectory), with $T_s=0.2$. Initial conditions are uniformly distributed on $[0,2\pi)^n$. Note that we do not consider data points on $[0,2\pi)$ but on the real line $\mathbb{R}$ (i.e. without the modulo operation) where there is no discontinuity between $0$ and $2\pi$. We use the dual method with Gaussian radial basis functions (with $\gamma=0.1$) and with $N_F = n$ library functions
\begin{equation*}
\begin{split}
\{ 1 \, , \, & \sin(\theta_1-\theta_i) \, , \, \dots \, , \, \sin(\theta_{i-1}-\theta_i) \, , \\
& \sin(\theta_{i+1}-\theta_i) \, , \, \dots \, , \, \sin(\theta_{n}-\theta_i) \}
\end{split}
\end{equation*}
for the $i$th component of the vector field. ROC curves are shown in Figure \ref{fig:ROC_Kuramoto} and the results are summarized in Table \ref{tab:examples_Kuramoto}, for different values of $K$ and $\sigma_{meas}$. They show that the dual method achieves good performance to reconstruct the whole network. In particular, with high threshold values, one can infer many true positive links with no false positive links.

\begin{figure}[h]
	\centering
	\includegraphics[width=0.5\linewidth]{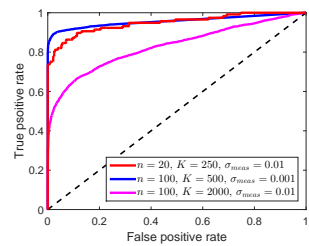}
	\caption{ROC curves obtained with the dual method for the reconstruction of a network of Kuramoto oscillators.}
	\label{fig:ROC_Kuramoto}
\end{figure}

 \begin{table}[h]
 	\centering
 		\caption{Results obtained with the dual method for the reconstruction of a network of Kuramoto oscillators.}
 	 	\label{tab:examples_Kuramoto}
 	\begin{tabular}{cccc}
 		\hline 
 		\hline 
 		$n$  & $K$ & $\sigma_{meas}$ & AUROC \\ 
 		\hline 
 		20 & 250 & 0.01 & 0.95 \\
 		100 & 500 & 0.001 & 0.96 \\
 		100 & 2000 & 0.01 & 0.83 \\
 		\hline 
 		\hline 
 	\end{tabular} 
 \end{table}

\subsubsection{Network with nonlinear couplings} We consider a network where each state is directly influenced by other states through $n_{inter}$ quadratic and cubic nonlinearities. Each nonlinear interaction depends on at most two states. \blue{The dynamics of the system are given by
\begin{equation}
\label{dyn_network}
\dot{x}_j = w_1^j x_j  + \sum_{k=2}^{N_F} w_k^j \, h_k \qquad j=1,\dots,n
\end{equation}
where the functions $h_k$ are monomials of total degree less or equal to $3$. For each $j$, only $n_{inter}$ coefficients $w_k^j$ are nonzero and associated with monomials of the form $x_k^p x_l^q$, with total degree $p+q \in \{2,3\}$. The coefficients $w_1^j$ are chosen according to a uniform distribution on $[0,1]$ and the coefficients $w_k^j$, with $k>1$, are distributed according to a Gaussian distribution of zero mean and standard deviation equal to one.} The first term in \eqref{dyn_network} is a linear term that ensures local stability of the origin. For several network sizes ($n\in\{20,50,100\}$), we generate $K$ samples from $K/2$ trajectories ($2$ data pairs on each trajectory), with $T_s=0.5$. Initial conditions are uniformly randomly distributed on $[-0.5,0.5]^n$. 

Although we could also consider the main method for small networks (typically $n \leq 20$), we use only the dual method with Gaussian radial basis functions (with $\gamma=0.01$). The library functions are monomials of total degree less or equal to $3$. The method provides an accurate estimation of the vector field and a good reconstruction of the network (Table \ref{tab:examples_nonlinear_net}). The ROC curves depicted in Figure \ref{fig:examples_nonlinear_net}(a) show that most of half of the links can be inferred with no false positive link (with high threshold values). As shown in Figure \ref{fig:examples_nonlinear_net}(b-d), the method is also efficient to infer the nature of the interactions (e.g. quadratic, cubic). Taking advantage of sparsity, it uses not more than $1000$ sample points to identify up to $17.10^6$ coefficients (most of which are zero). We finally note that, for larger networks, the use of monomials as library functions becomes too demanding in terms of memory. In this case, the dual method can still be used to estimate the value of the vector field at the sample points, but should be combined with other (regression) methods to infer the network.

 \begin{table}[h!]
	\centering
		\caption{Results obtained with the dual method for the reconstruction of a network with quadratic and cubic interactions.}
		\label{tab:examples_nonlinear_net}
	\begin{tabular}{ccccc}
		\hline 
		\hline 
		$n$  & $n_{inter}$ &  $K$ & AUROC & NRMSE \\ 
		\hline 
		20 & 5 & 200 & 0.94 & 0.019 \\
		50 & 15 & 600 & 0.87 & 0.015 \\
		100 & 10 & 1000 & 0.91 & 0.004 \\
		\hline 
		\hline 
	\end{tabular} 
\end{table}

\begin{figure}[h!]
	\centering
	\subfigure[ROC curves]{\includegraphics[width=0.45\linewidth]{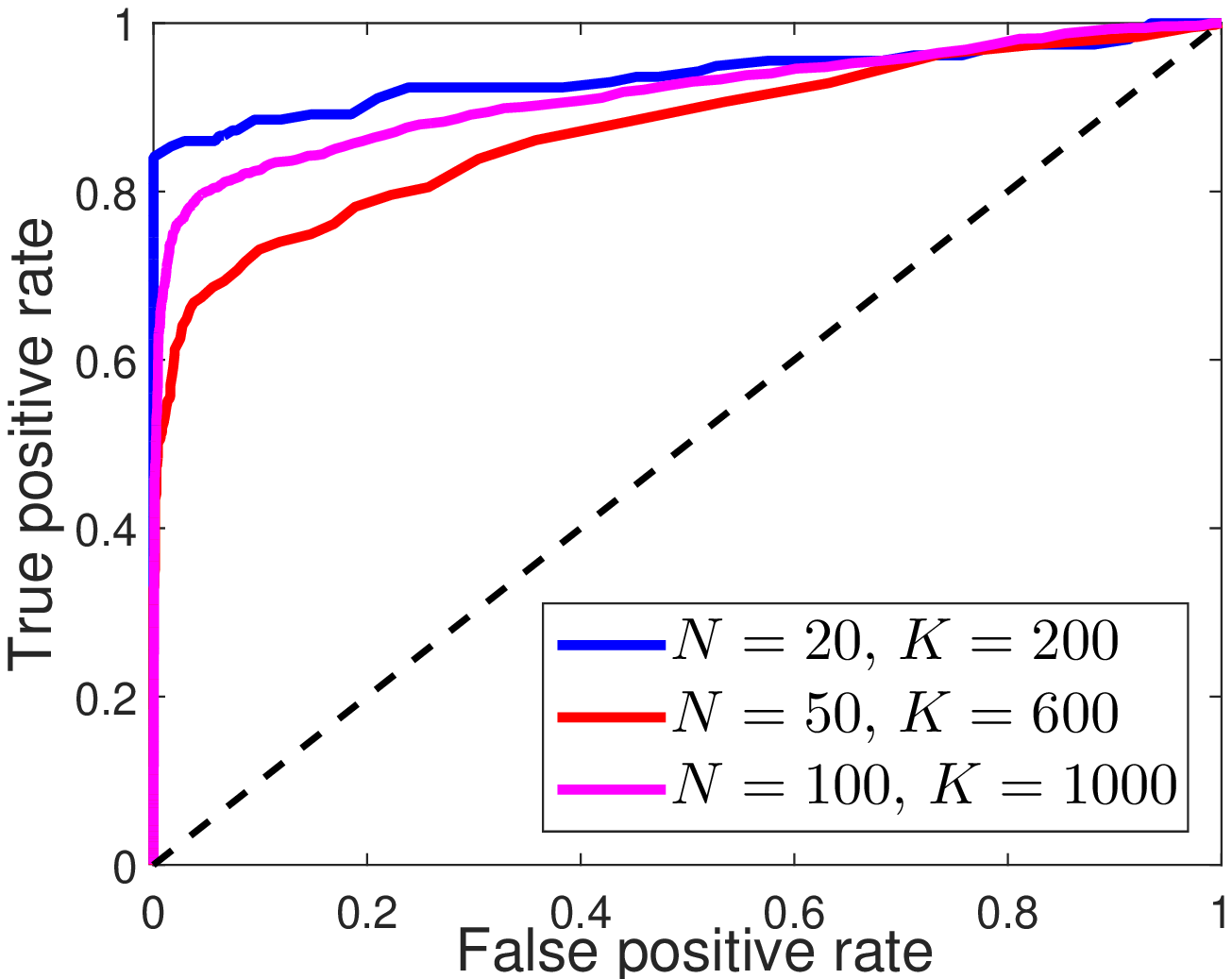}}
	\subfigure[$N=20$, $K=200$]{\includegraphics[width=0.45\linewidth]{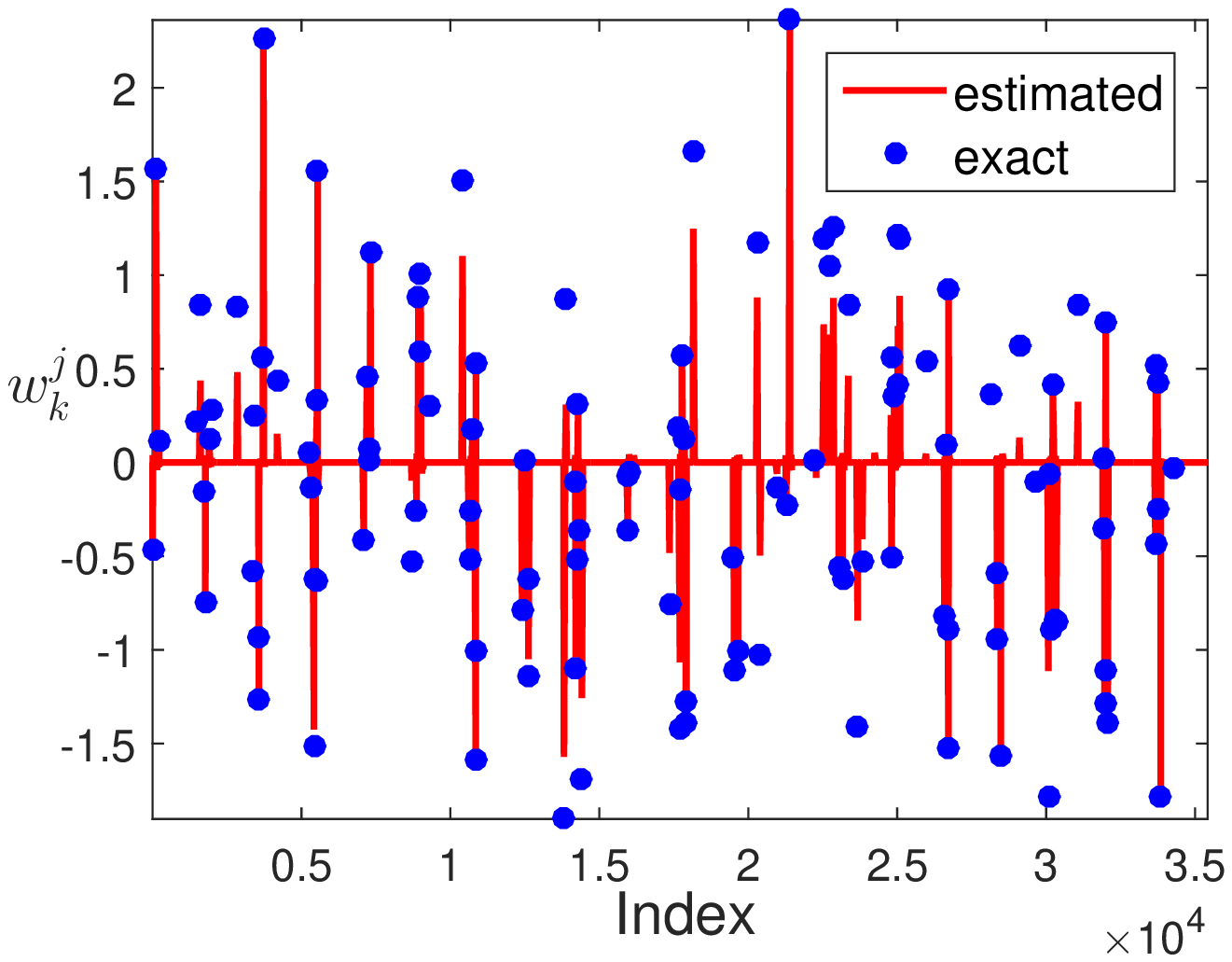}}
	\subfigure[$N=50$, $K=600$]{\includegraphics[width=0.45\linewidth]{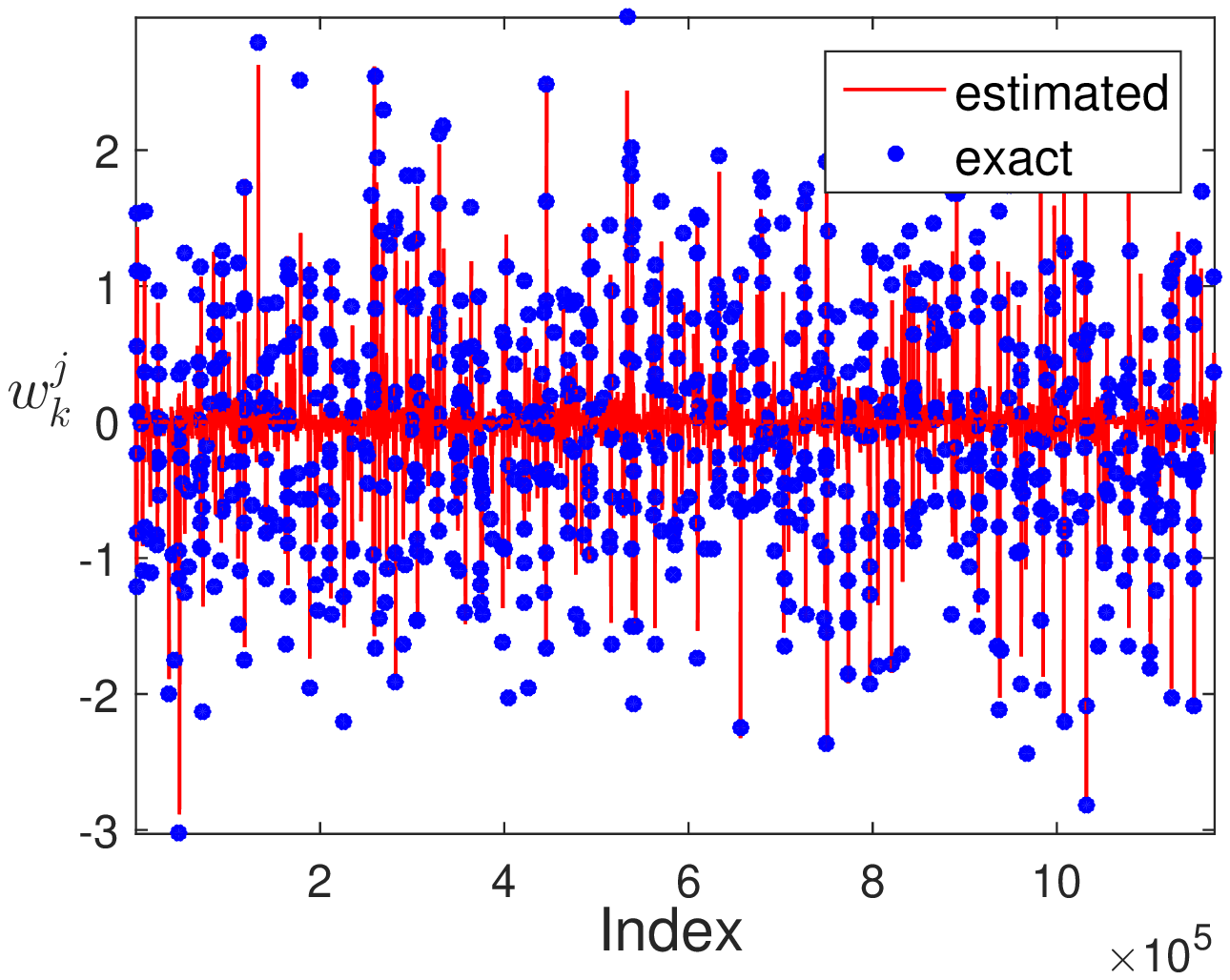}}
	\subfigure[$N=100$, $K=1000$]{\includegraphics[width=0.45\linewidth]{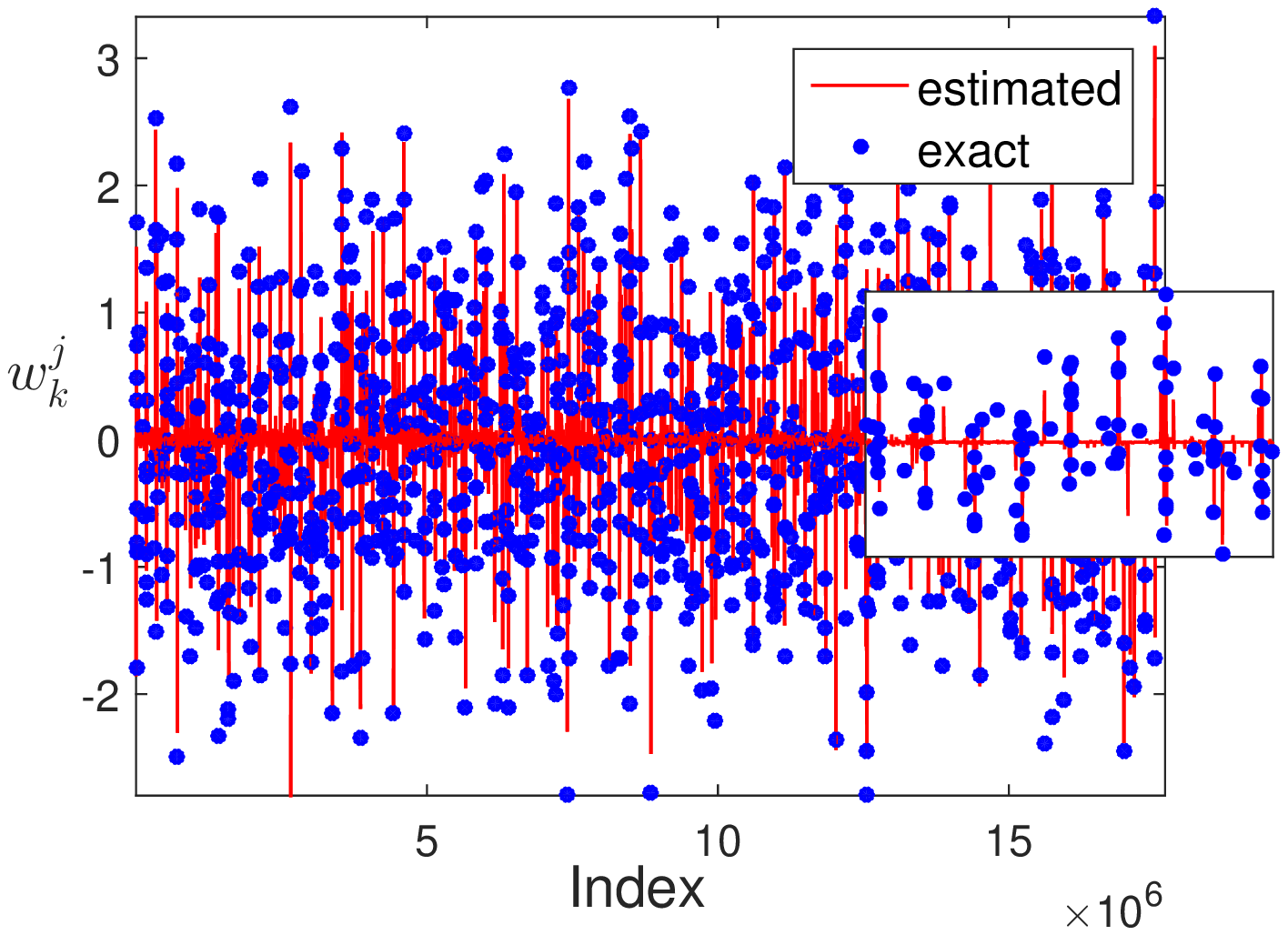}}
	\caption{ROC curves and vector field coefficients obtained with the dual method for the reconstruction of a network with quadratic and cubic interactions. In (d), the inset shows a close-up of some estimated coefficients.}
	\label{fig:examples_nonlinear_net}
\end{figure}

\section{Conclusion}
\label{sec:conclu}

We have proposed a novel method for nonlinear systems identification. This method relies on a lifting technique developed in an operator-theoretic framework: it aims at identifying the linear Koopman operator in the space of observables. Key advantages of the method are that numerical schemes rely only on linear techniques and do not require the estimation of state time derivatives. For these reasons, this is a promising alternative to direct identification methods. As shown with several examples, the method is efficient to recover the vector field of several classes of systems, even from small time series with low sampling rate. Moreover, a dual method is also proposed to identify high-dimensional systems and is successfully applied to network reconstruction. Theoretical results also prove the convergence of the two methods in optimal conditions.

The results presented in this paper open the door to further developments and improvements of lifting techniques for nonlinear systems identification, some of which are related to recent advances in Koopman operator theory. For instance, identification lifting techniques with dictionary learning could be developed \cite{Kevrekidis_learning}. Extensions to general vector fields might also be considered, possibly without using library functions. Toward this end, lifting techniques could be combined with other methods: identify unknown parameters with Kalman filtering \cite{Wei_Kalman}, consider rational functions in the vector field with alternating directions method \cite{Brunton_network_inference}, apply machine learning regression techniques on time derivatives estimated with the dual method, etc. Moreover, we might improve the method robustness to (measurement) noise and provide numerical schemes that are unbiased and consistent. In this context, Bayesian inference could be considered as a relevant approach. A careful study of the matrix logarithm used in the lifting method could also help to select the good branch (instead of the principal one), a strategy which might improve the performances when the sampling rate is low. Theoretical results could also be obtained to provide bounds on the estimation error. \blue{Finally, a potential extension of the proposed approach is to consider the case of unobserved states (e.g. hidden nodes in the context of network identification). In this context, classic linear identification methods could be exploited (e.g. subspace identification methods \cite{subspace_identif}).  In the same line, connections with (nonlinear) system identification methods such as the modulating function approach \cite{modulating_functions} could be investigated.}

\section*{Acknowledgments}
 
The authors acknowledge J. Winkin and F. Lamoline for fruitful discussions and suggestions, and for their help in the proofs presented in the manuscript. They also wish to thank S. Brunton and N. Kutz for suggesting Kuramoto oscillators in the reconstruction problem. The authors acknowledge support from the Luxembourg National Research Fund. This paper presents research results of the Belgian Network DYSCO (Dynamical Systems, Control, and Optimization), funded by the Interuniversity Attraction Poles Programme initiated by the Belgian Science Policy Office. This research used resources of the \guillemets{Plateforme Technologique de Calcul Intensif (PTCI)} located at the University of Namur, Belgium, which is supported 
by the F.R.S.-FNRS under the convention No.2.5020.11. The PTCI is member of the \guillemets{Consortium des Équipements de Calcul Intensif (CÉCI)}.

\bibliographystyle{IEEEtran}


\end{document}